\documentclass[12pt]{article}
\usepackage[a4paper,margin=27mm]{geometry}
\usepackage[T1]{fontenc}
\usepackage{lmodern}
\usepackage{amsmath,amssymb,amsthm,mathtools}
\usepackage{graphicx}
\graphicspath{{images/}}
\usepackage[hidelinks]{hyperref}
\allowdisplaybreaks
\numberwithin{equation}{section}
\newtheorem{theorem}{Theorem}[section]
\newtheorem{lemma}[theorem]{Lemma}
\newtheorem{proposition}[theorem]{Proposition}
\newtheorem{corollary}[theorem]{Corollary}
\newtheorem{conjecture}[theorem]{Conjecture}
\newtheorem{claim}[theorem]{Claim}
\newtheorem{definition}[theorem]{Definition}
\newtheorem{example}[theorem]{Example}
\newcommand{\F}{\mathcal F}
\newcommand{\G}{\mathcal G}
\newcommand{\cH}{\mathcal H}
\newcommand{\M}{\mathcal M}
\newcommand{\hht}{\mathcal T}
\newcommand{\Rr}{\mathcal R}
\title{The maximum size of intersecting $k$-graphs\\
with covering number $k$}
\author{Peter Frankl$^1$, Jian Wang$^2$, Yaoran Yang$^3$, Yutong Zhang$^4$\\[10pt]
$^{1}$R\'{e}nyi Institute, Budapest, Hungary\\[6pt]
$^{2,3,4}$School of Mathematics, Sichuan University, Chengdu, 610065, China.\\[6pt]
E-mail:   $^1$frankl.peter@renyi.hu, $^2$wangjianmath01@scu.edu.cn\\[6pt]
 $^3$yangyaoran@stu.scu.edu.cn, $^4$yutongzhang@stu.scu.edu.cn
}
\date{}
\begin{document}
\maketitle

\begin{abstract}
Let $m(k)$ be the maximum size of an intersecting $k$-graph with
covering number $k$. In the present paper, it is proved that for all $k>e^{32}$,
\[
 m(k)\ge k^k\exp\bigl(-50\sqrt{k\log k}\bigr).
\]
Together with Zakharov's upper bound \cite{Z}, this implies
$m(k)=k^k\exp\bigl(-k^{1/2+o(1)}\bigr)$.
\end{abstract}

\section{Introduction}
A collection of distinct $k$-sets is called a \emph{$k$-graph}.
We use $V(\F)$ to denote the vertex set of a $k$-graph $\F$, that is, $V(\F) =\cup_{F\in \F}F$. 
A $k$-graph $\F$ is called \emph{intersecting} if
$F\cap F'\neq\varnothing$ for all $F,F'\in\F$.
A set $T$ is called a \emph{transversal} (or a \emph{cover}) of $\F$ if
$F\cap T\neq\varnothing$ for all $F\in\F$.
Define the \emph{covering number}
\[
 \tau(\F):=\min\{|T|:T\text{ is a transversal \ of }\F\}.
\]
If $\F$ is intersecting then every $F\in\F$ is a transversal
whence $\tau(\F)\leq k$.
All logarithms are natural.

The following result can be extracted from the seminal
paper of Erd\H{o}s and Lov\'asz~\cite{EL}.
\begin{theorem}\label{intro:EL-upper}
Suppose that $\F$ is an intersecting\ $k$-graph, $\tau(\F)=k$. Then
\[
 |\F|\leq k^k.
\]
\end{theorem}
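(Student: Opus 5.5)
The plan is the classical Erd\H{o}s--Lov\'asz branching argument. We count sets via a tree of partial transversals. For $0\le i\le k$, we will show that there is a family $\mathcal{T}_i$ of $i$-element sets with $|\mathcal{T}_i|\le k^i$ and two properties. First, each $T\in\mathcal{T}_i$ is contained in some member of $\F$. Second, every $F\in\F$ contains at least one member of $\mathcal{T}_i$. Taking $i=k$ then gives the claim: every $F\in\F$ contains a $k$-set from $\mathcal{T}_k$, hence equals it, so $|\F|\le|\mathcal{T}_k|\le k^k$.

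We build the families by induction on $i$, starting from $\mathcal{T}_0=\{\varnothing\}$. Suppose $i<k$ and $\mathcal{T}_i$ has been constructed. Take $T\in\mathcal{T}_i$. Since $|T|=i<k=\tau(\F)$, the set $T$ is not a transversal, so there is some $F_T\in\F$ with $F_T\cap T=\varnothing$. We put $T\cup\{x\}$ into $\mathcal{T}_{i+1}$ for each $x\in F_T$.

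This gives at most $k$ children per $T$, so $|\mathcal{T}_{i+1}|\le k|\mathcal{T}_i|\le k^{i+1}$. Each child has exactly $i+1$ elements because $x\notin T$.

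It remains to check the covering property. Let $F\in\F$ and choose $T\in\mathcal{T}_i$ with $T\subseteq F$. Because $\F$ is intersecting, $F\cap F_T\neq\varnothing$. Pick $x\in F\cap F_T$. Then $x\notin T$ and $T\cup\{x\}\subseteq F$, and $T\cup\{x\}\in\mathcal{T}_{i+1}$. This is where both hypotheses enter: $\tau(\F)=k$ guarantees that $F_T$ exists for every $i<k$, and the intersecting property guarantees that the branching along $F_T$ reaches every member of $\F$ containing $T$.

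The main point needing care is only this induction step. It uses $\tau(\F)=k$ to guarantee $F_T$ exists at every level $i<k$, and it uses the intersecting property to ensure that branching on $F_T$ loses no member of $\F$. The rest is bookkeeping, and the first property of $\mathcal{T}_i$ is not even needed for the bound itself.
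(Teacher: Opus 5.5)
Your proof is correct and is essentially the paper's Erd\H{o}s--Lov\'asz argument. Both use the same two ingredients: since $|A|<\tau(\F)$ there is a member $F$ disjoint from the current partial set $A$, and because $\F$ is intersecting, branching on the $k$ elements of $F$ loses no edge containing $A$. The only difference is packaging: the paper follows a single heaviest set and proves the recursion $\Delta_\ell(\F)\le k\,\Delta_{\ell+1}(\F)$, while you build the full tree $\mathcal{T}_0,\dots,\mathcal{T}_k$.

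One small point: the first property you state (each member of $\mathcal{T}_i$ lies in some edge of $\F$) is not guaranteed by your construction as written. In the Fano plane, for example, two of the three children of a $2$-set lie in no line. As you note, this property is never used, and you could restore it by discarding children not contained in any edge.
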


Let us show the short proof. First a definition.
For a set $A$, define $\F(A)=\{F\setminus A:A\subset F\in\F\}$ and set
\[
 \Delta_\ell(\F)=\max\{|\F(A)|:|A|=\ell\},\qquad
 \Delta_0(\F)=|\F|.
\]

\begin{claim}\label{intro:degree-growth}
$\Delta_{\ell+1}(\F)\geq\Delta_\ell(\F)/k$ for $k>\ell\geq0$.
\end{claim}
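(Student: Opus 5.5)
Fix an $\ell$-set $A$ with $|\F(A)|=\Delta_\ell(\F)$; for $\ell=0$ take $A=\varnothing$, so $\F(A)=\F$. The idea is to find a $k$-set $G$, disjoint from $A$, that meets every member of $\F(A)$. Once we have such a $G$, each $F\setminus A\in\F(A)$ contains some $x\in G$, so
\[
 \Delta_\ell(\F)=|\F(A)|\le\sum_{x\in G}|\F(A\cup\{x\})|\le k\,\Delta_{\ell+1}(\F),
\]
since each $A\cup\{x\}$ is an $(\ell+1)$-set. This is exactly the Claim.

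To find $G$ I would use the hypothesis $\tau(\F)=k$ together with $|A|=\ell<k$. Because $A$ has fewer than $k$ elements, it is not a transversal of $\F$. Hence there is some $G\in\F$ with $G\cap A=\varnothing$. This $G$ is a $k$-set, and it is disjoint from $A$ as required.

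It remains to check that $G$ meets every member of $\F(A)$. Take any $F\in\F$ with $A\subset F$. Since $\F$ is intersecting, $F\cap G\ne\varnothing$. Since $G\cap A=\varnothing$, that intersection must lie in $F\setminus A$. So $G$ meets every member of $\F(A)$, and this is the covering property used in the sum above.

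There is no real obstacle here. The only points to watch are that $\ell<k$ is exactly what rules out $A$ being a transversal, and that when $x\notin F\setminus A$ the corresponding term simply does not count that $F$, so the union bound is valid as written.
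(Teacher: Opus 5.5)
Your proposal is correct and is essentially the paper's proof. You take a maximizing $\ell$-set $A$, use $|A|<\tau(\F)=k$ to find an edge disjoint from $A$, and use the intersecting property to bound $|\F(A)|$ by $\sum_{x}|\F(A\cup\{x\})|\le k\,\Delta_{\ell+1}(\F)$.
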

\begin{proof}
Fix $A$, $|A|=\ell$ satisfying $|\F(A)|=\Delta_\ell(\F)$.
Since $|A|<\tau(\F)$, there exists $F\in\F$ with $A\cap F=\varnothing$.
As $\F$ is intersecting, $G\cap F\neq\varnothing$ for all $G\in\F(A)$.
Hence
\[
 \sum_{x\in F}|\F(A\cup\{x\})|\geq|\F(A)|=\Delta_\ell(\F).
\]
Consequently
\[
 \Delta_{\ell+1}(\F)
 \geq\max_{x\in F}|\F(A\cup\{x\})|
 \geq\Delta_\ell(\F)/k. \qedhere
\]
\end{proof}

As $\Delta_k(\F)=1$, $\Delta_{k-1}(\F)\leq k$,
$\Delta_{k-2}(\F)\leq k^2$, $\ldots$, $\Delta_0(\F)\leq k^k$
follows. 

In view of Theorem~\ref{intro:EL-upper} we can define
\[
 m(k)=\max\{|\F|:\F\text{ is an intersecting \ $k$-graph with }\tau(\F)=k\}.
\]
$m(1)=1$, $m(2)=3$ are trivial and $m(3)=10$ was proved by
Lov\'asz~\cite{lovasz}.
For $k=4$, there is already a large gap between the lower
and upper bounds (cf.~\cite{FOT,FW}):
\[
 42\leq m(4)\leq64.
\]

Erd\H{o}s and Lov\'asz~\cite{EL} proved that
$m(k)\geq\lfloor k!(e-1)\rfloor$.
Lov\'asz~\cite{lovasz} conjectured that this is the exact bound.
In \cite{FOT}, Lov\'asz's conjecture was disproved for $k\geq4$,
and the lower bound for $m(k)$ was improved to $(k/2)^{k-1}$.

The Erd\H{o}s--Lov\'asz lower bound follows from the next construction,
which will also be used in the proof of Corollary~\ref{des:product}.

\begin{example}[Erd\H{o}s--Lov\'asz {\cite{EL}}]\label{ex:EL}
Let $E_1,\ldots,E_k$ be pairwise disjoint sets with $|E_i|=i$,
and set
\[
 \mathcal E=\bigcup_{i=1}^k
 \{E_i\cup\{x_{i+1},\ldots,x_k\}:x_j\in E_j\ (i<j\le k)\}.
\]
Then it can be checked that  $\mathcal E$ is an intersecting $k$-graph with $ \tau(\mathcal E)=k$ and 
\[
 |\mathcal E|=\sum_{i=1}^k\frac{k!}{i!}
             =\lfloor(e-1)k!\rfloor.
\]
\end{example}

 In  \cite{FOT}, the first author, Ota and Tokushige proposed the following conjecture.

\begin{conjecture}[Frankl--Ota--Tokushige {\cite{FOT}}]
\label{conj:FOT}
There is an absolute constant $c$, with $1/2<c<1$, such that
\[
 m(k)<(ck)^k
\]
for all sufficiently large integers $k$.
\end{conjecture}

The upper bound $k^k$ of Erd\H{o}s--Lov\'asz was successively improved by
Tuza~\cite{T}, Arman and Retter~\cite{AR}, and the first
author~\cite{Fr}; see also Cherkashin~\cite{Ch} for the related
three-chromatic problem. The current best upper bound is due to Zakharov~\cite{Z}, who proved that
\begin{equation}\label{intro:Zakharov}
 m(k)\le k^k\exp\bigl(-k^{1/2+o(1)}\bigr).
\end{equation}
The main result of this paper is the following.
\begin{theorem}\label{main:lower}
For every integer $k$ with $\log k\geq32$,
\[
 m(k)\ge k^k\exp\bigl(-50\sqrt{k\log k}\bigr).
\]
\end{theorem}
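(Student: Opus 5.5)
The plan is to build an explicit family by a "product" or "substitution" construction that generalizes Example~\ref{ex:EL}. Recall that in $\mathcal E$ the loss relative to $k^k$ comes from the small sizes $|E_i|=i$: the count is $\prod_{j>i}|E_j|$, which is only about $k!$. I would replace the chain of sets $E_1,\dots,E_k$ by a two-level structure. The outer level is a small intersecting $s$-graph $\G$ with $\tau(\G)=s$, where $s\approx\sqrt{k/\log k}$. The inner level consists of, for each vertex $v$ of $\G$, a disjoint "block" $B_v$ carrying a $t$-uniform family $\cH_v$ with $t\approx k/s$, chosen so that every member uses essentially a full factor of about $k$ choices per coordinate.

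\textbf{Step 1 (product lemma).} I would first prove the statement I expect Corollary~\ref{des:product} to formalize. Suppose $\G$ is an intersecting $s$-graph with $\tau(\G)=s$, and each $\cH_v$ is a $t$-graph with the following two properties.
\begin{itemize}
\item[(a)] Any two members of $\cH_v$ intersect, or else the gluing rule forces intersection elsewhere.
\item[(b)] $\cH_v$ cannot be covered by fewer than $t$ points.
\end{itemize}
Then the family
\[
\F=\Bigl\{\textstyle\bigcup_{v\in G} H_v : G\in\G,\ H_v\in\cH_v\Bigr\}
\]
is an intersecting $st$-graph with $\tau(\F)=st$. Intersection is checked by taking two members whose outer edges share a vertex $v$ and then using intersection inside $\cH_v$. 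For the covering number, I would take any $T$ with $|T|<st$. Some outer vertex set of size $s$ then fails to be fully covered, in the sense that on some $v$ the trace $T\cap B_v$ has fewer than $t$ points. Since $\tau(\G)=s$, I would pick $G$ avoiding the set of vertices that are fully covered, and then pick $H_v$ avoiding $T\cap B_v$.

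\textbf{Step 2 (inner families).} For the inner families I would not use $\mathcal E$ itself. Instead I would use an Erd\H{o}s--Lov\'asz-type chain with all level sizes close to $k$, whose cross-intersection is guaranteed by the outer structure rather than by nested levels. The counting target is that $|\F|\ge |\G|\cdot\min_v|\cH_v|^{s}$ with $|\cH_v|\ge (k e^{-O(\cdot)})^{t}$. The losses come from two sources. The first is the $s$ layers where sizes must shrink, for instance the first $\ell$ levels of each chain. The second is the factor $t^{-\Theta(s)}$ coming from $\G$. Taking $\G$ to be a projective plane of order $q\approx s$, or the Erd\H{o}s--Lov\'asz family on $s$ points, costs only $e^{O(s\log s)}$. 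Each chain needs about $\sqrt{k\log k}/s$ "short" levels, giving a total exponent $O(s\log k + k/s)$. This is optimized at $s\approx\sqrt{k/\log k}$, which yields $\exp(-O(\sqrt{k\log k}))$. The explicit constant $50$ and the condition $\log k\ge 32$ would then come from bookkeeping with floors, prime-power rounding and Stirling's formula.

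\textbf{Main obstacle.} The hard step is making the inner families nearly full, with about $k^{t}$ members on roughly $t$ coordinates each of size about $k$, while keeping both properties of Step 1, and above all $\tau=k$ for the glued family. A naive full product $\prod E_j$ is not intersecting, and fixing it by nesting reintroduces the $k!$ loss. What I would try first is a random or algebraic choice: take a random subfamily of $\prod_j E_j$ with $|E_j|=k$, or use an MDS/Reed--Solomon-type code. Two codewords then agree in some coordinate whenever they belong to blocks of the same outer vertex. Such a code should lose only $e^{O(t\log k)}$ per block while keeping the covering number full, since a cover missing one coordinate still leaves a codeword avoiding it. If the pure code approach fails the covering condition, I would fall back to attaching a short Erd\H{o}s--Lov\'asz tail of length $O(\sqrt{k\log k}/s)$ to each block, which absorbs the covering requirement at the stated cost.
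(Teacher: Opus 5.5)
Your Step~1 is a correct lemma (it gives $m(st)\ge m(s)\,m(t)^s$), but the approach has a genuine gap: no choice of inner families lets it reach $k^ke^{-O(\sqrt{k\log k})}$, and the target in Step~2 is ruled out by Theorem~\ref{intro:EL-upper}.

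In a full product, two members can meet only inside a block $B_v$ with $v$ in both outer edges. So the hedge ``or else the gluing rule forces intersection elsewhere'' has no content. To see this, let $A$ be the set of $v$ for which $\cH_v$ is intersecting. For $G,G'\in\G$ you need $G\cap G'\cap A\neq\varnothing$; otherwise pick disjoint inner sets in every common block. Hence $G\cap A$ covers $\G$. Taking a minimum cover of $\cH_v$ for each $v\in G\cap A$ then gives a cover of $\F$ of size at most $t|G\cap A|\le st$, with equality only if $G\subseteq A$ and $\tau(\cH_v)=t$. Thus $\tau(\F)=st$ forces every inner family to be an intersecting $t$-graph with covering number $t$. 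Theorem~\ref{intro:EL-upper} then gives $|\cH_v|\le t^t$ however large $B_v$ is; Claim~\ref{intro:degree-growth} caps the ``choices per coordinate'' at $t$, not $k$. With $|\G|\le s^s$ this yields
\[
 |\F|\le s^s t^{st}=k^k s^{-(k-s)}.
\]
For $s\approx\sqrt{k/\log k}$ this is $k^k\exp\bigl(-(\tfrac12-o(1))k\log k\bigr)$, and for every $2\le s\le k/2$ it already loses a factor $2^{k/2}$. Your requirement $|\cH_v|\ge(ke^{-O(\cdot)})^t$ exceeds $t^t$ by roughly $s^t$. No Reed--Solomon-type code, random subfamily of $\prod_jE_j$, or per-block Erd\H{o}s--Lov\'asz tail can satisfy it, and the exponent count $O(s\log k+k/s)$ is not backed by any construction.

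The missing idea is that the part of the family carrying about $k$ choices per coordinate must \emph{not} be intersecting. The paper takes $\Rr=B_1\times\cdots\times B_v$, with $v\ge k-20\sqrt{k/\log k}$ blocks of size $b\ge k-37\sqrt{k\log k}$. It realizes $\Rr$ inside the family of minimum covers of an auxiliary intersecting $k$-graph $\G$ with $\tau(\G)=v$ and $\tau(\hht^{(v)}(\G))=k$ (Theorem~\ref{main:many-covers}). Each $v$-cover $T$ is then joined with each member of a single Erd\H{o}s--Lov\'asz $(k-v)$-graph $\mathcal E$ on new vertices (Lemma~\ref{des:completion}). The sets $T\cup E$ meet one another through $\mathcal E$, and they meet $\G$ because $T$ is a cover. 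A transversal $R$ either covers $\hht^{(v)}(\G)$, so $|R|\ge k$, or misses some $T$ and must then cover both $\mathcal E$ and $\G$, so $|R|\ge(k-v)+v$. This gives $m(k)\ge b^v(k-v)!$ and the theorem.

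Your fallback of adding an Erd\H{o}s--Lov\'asz tail is close in spirit. However, the tail must be attached once, globally, to supply the intersections of a non-intersecting product, not inside each block.

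The genuinely hard step, with no counterpart in your proposal, is building $\G$:
\begin{itemize}
\item its edges $E_i(U,g)$ contain a whole block $B_i$, so every member of $\Rr$ is a cover;
\item the edges are made pairwise intersecting through the matching-based hypergraph $\mathcal L$ and the admissible pairs $(U,J)$;
\item covers of size less than $v$ are excluded with positive probability, using the random injections $f_{i,j}\colon L_i\to B_j$ and Hall's theorem.
\end{itemize}
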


Together with \eqref{intro:Zakharov}, Theorem~\ref{main:lower} implies 
$m(k)=k^k\exp\bigl(-k^{1/2+o(1)}\bigr)$, which disproves Conjecture~\ref{conj:FOT} for all sufficiently large $k$.

The construction is based on the following theorem, which provides
an intersecting $k$-graph with many minimum covers.

For $0<v<k$, let $\hht^{(v)}(\G)$ denote the family of all transversals of $\G$ of size $v$. 

\begin{theorem}\label{main:many-covers}
For every integer $k$ with $\log k\geq32$, there are integers
$0<b<v<k$ satisfying
\begin{equation}\label{intro:cover-bounds}
 b\geq k- 37\sqrt{k\log k},\qquad
 v\geq k - 20\sqrt{k/\log k},
\end{equation}
and an intersecting $k$-graph $\G$ on a finite set $X$
with $\tau(\G)=v$ such that $|\hht^{(v)}(\G)|\geq b^v$ and $\tau(\hht^{(v)}(\G))=k$.
\end{theorem}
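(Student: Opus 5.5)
The plan is to build $\G$ as a "blown-up" family on $X=P_1\cup\dots\cup P_v$, with the $P_i$ pairwise disjoint and $|P_i|=b$. The minimum covers of $\G$ will essentially be the \emph{rainbow} sets, those containing exactly one point of each $P_i$. There are $b^v$ of these, which gives the bound $|\hht^{(v)}(\G)|\ge b^v$ at once, provided every edge of $\G$ contains some full part $P_i$.

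\textbf{Shape of the edges and the main obstacle.} Each edge will be $P_i\cup Q$ for some $i$, where $Q$ is a set of $s:=k-b\le 37\sqrt{k\log k}$ points outside $P_i$. We want $\G$ to be intersecting. Two edges based on the same $i$ already meet. For $i\ne j$, the edges $P_i\cup Q$ and $P_j\cup Q'$ must meet, so we need $Q\cap P_j\neq\varnothing$, or $Q'\cap P_i\ne\varnothing$, or $Q\cap Q'\ne\varnothing$.

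A pure tournament rule (put one point of $P_j$ into $Q$ for every out-neighbour $j$ of $i$) would force $s\approx v/2$, which is far too large. So I would let the extra sets carry a second-level structure. Split the $v$ indices into $t\approx\sqrt{k/\log k}$ groups of size about $\sqrt{k\log k}$. For $i$ in group $g$, the set $Q$ contains one point from each part of a few designated groups, chosen by a tournament on the $t$ groups. Pairs of edges inside the same group are then handled by a random choice of $Q$ within the group.

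The probabilistic estimate would be the following. If each admissible $Q$ is chosen from a family of size about $b^{s}$ in which any two members from different base indices meet with probability at least $1-k^{-3}$, then the union bound over pairs shows that a suitable deletion leaves an intersecting family. This is exactly where the factor $\sqrt{\log k}$ enters $s$. I expect this step, balancing $s$ against intersecting with high probability, to be the main obstacle. The first thing I would try is an explicit choice of the $Q$'s via a projective-plane-like design on group representatives, with randomness used only as a fallback.

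\textbf{Covering number of $\G$.} To get $\tau(\G)=v$, I will show that every set $T$ with $|T|<v$ misses some part $P_i$ entirely, and then exhibit an edge $P_i\cup Q$ avoiding $T$. This needs enough freedom in the choice of $Q$: for fixed $i$, the admissible $Q$'s should not all meet a given set of $v-1$ points. That holds if each $Q$ uses, in each part it touches, any of the $b$ points.

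The reason for using $v<k$ rather than $v=k$ is this slack. It allows $b$ to be a little smaller than $v$, and it allows $v\ge k-20\sqrt{k/\log k}$, where the lost $k-v$ vertices are spent on the group structure.

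\textbf{Covering number of $\hht^{(v)}(\G)$.} Each edge of $\G$ meets every $v$-cover, so $\tau(\hht^{(v)}(\G))\le k$. For the reverse inequality, take $S$ with $|S|\le k-1$. I would find a rainbow set avoiding $S$ which is still a cover. Since $|S|<k\le$ (total slack), either some part $P_i$ has $P_i\not\subset S$ for all $i$, and then we pick a point $x_i\in P_i\setminus S$ for each $i$, giving a rainbow cover disjoint from $S$; or $S$ contains a whole part, which uses $b$ points and leaves at most $k-1-b<s$ further points of $S$. In the second case I would also add to $\G$, and hence to $\hht^{(v)}(\G)$, some non-rainbow $v$-covers that are forced by the $Q$-structure, and show that fewer than $s$ further points cannot block all of them.

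Finally, I would check the numerical constraints $b\ge k-37\sqrt{k\log k}$ and $v\ge k-20\sqrt{k/\log k}$ under $\log k\ge 32$. This is routine once the parameters $s$ and $t$ are fixed.
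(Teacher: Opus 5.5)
\textbf{Verdict: genuine gap.} Your outer skeleton matches the paper. Every edge contains a full block of size $b$, the rainbow sets give $|\hht^{(v)}(\G)|\ge b^v$, and $\tau(\hht^{(v)}(\G))=k$ comes from the fact that a cover of the rainbow sets contains a whole block and must then also meet $k-b$ further pairwise disjoint $v$-covers avoiding that block.

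\textbf{The intersecting mechanism does not work.} The heart of the theorem is making $\G$ intersecting with only $k-b=O(\sqrt{k\log k})$ extra points per edge while keeping $\tau(\G)=v$. Your proposed mechanisms fail on this.
\begin{itemize}
\item If $Q$ takes a point from every part of every out-neighbouring group, then $|Q|\approx\frac{t-1}{2}\cdot\frac vt\approx v/2$ again. If only ``a few'' groups are designated, the remaining pairs of groups are not handled.
\item The random fallback fails quantitatively. Two $s$-sets whose points are spread over parts of size $b\approx k$ share a point with probability $O(s^2/(vb))=O(\log k/k)$, not $1-k^{-3}$. Deletion cannot repair this, and deleting edges would anyway destroy the freedom you need later.
\item Your argument for $\tau(\G)=v$ needs $Q$ to use \emph{any} of the $b$ points of each part it touches. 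With that freedom, two edges based at $i\ne j$ whose $Q$'s miss $P_j$ and $P_i$ respectively can be chosen disjoint. This forces $j\in N(i)$ or $i\in N(j)$ for all pairs, hence $s\ge(v-1)/2$.
\end{itemize}
So inside $X=P_1\cup\cdots\cup P_v$, the intersecting and covering requirements are in direct conflict.

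\textbf{The missing idea: vertices outside the blocks carry the intersections.}
\begin{itemize}
\item The paper builds a $(4h+2s)$-graph $\mathcal L=\{L_1,\ldots,L_v\}$ from $4h$ edge-disjoint perfect matchings of $K_v$ whose edge sets are randomly partitioned into classes. It has $\Delta(\mathcal L)\le h$, $|L_i\cap L_j|\le m$, paired vertices $a_{ij}=a_{ji}$ with $L_i\setminus S_i=\{a_{ij}:j\in P_i\}$, and a spreading property giving $|D_i(U)|\le h+s-1$.
\item Edges are $E_i(U,g)=B_i\cup(L_i\setminus U)\cup\{f_{i,g(y)}(y):y\in U\}$. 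Only $h+s$ positions leave $L_i$, and they land at points pinned by fixed random injections $f_{i,j}:L_i\to B_j$, not at arbitrary points.
\item A pair with $j\notin J$ and $i\notin J'$ meets deterministically in $L_i\cap L_j$. Either it meets at $a_{ij}$, or $U$ and $U'$ each take less than half of $L_i\cap L_j$.
\item Consequently $\tau(\G)=v$ is not a freedom argument but a genuine probabilistic one. It uses Hall's theorem on $\Gamma_i(W;U,J)$, the excess bound $p\ge h+s-1$, and a union bound over $W$ split into three regimes by how full the heaviest block is.
\end{itemize}

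\textbf{The second case for $\tau(\hht^{(v)}(\G))$ is left open.} The outside vertices are also what make it work. The required disjoint covers are $T_{i,y}=\{y\}\cup\{f_{i,j}(y):j\ne i\}$ for $y\in L_i$. Without the vertex $y\notin\bigcup_j B_j$, the analogous set $\{f_{i,j}(y):j\ne i\}$ would be a cover of size $v-1$, contradicting $\tau(\G)=v$. Note also that adding sets to $\G$ can only shrink $\hht^{(v)}(\G)$, never enlarge it, so ``adding'' non-rainbow covers through $\G$ is not available.
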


\section{The construction}
\label{sec:designated}

In Section~\ref{sec:parameter-choice}, we deduce
Theorem~\ref{main:lower} from Theorem~\ref{main:many-covers}.
The remaining subsections prove Theorem~\ref{main:many-covers}.
We construct an auxiliary uniform hypergraph $\cH$ from a fixed collection
of edge-disjoint perfect matchings of the complete graph $K_{2n}$.
The vertices of $\cH$ represent classes of matching edges.
Adding new vertices of degree one provides the hypergraph $\mathcal L$ of
Corollary~\ref{des:auxiliary} in Section~\ref{sec:auxiliary}.
We then use $\mathcal L$ and disjoint blocks $B_1,\ldots,B_v$
to construct a random intersecting $k$-graph $\G$ on
$X=V(\mathcal L)\cup(\bigcup_iB_i)$ in Section~\ref{sec:intersecting}.
Sections~\ref{sec:small-covers} and \ref{sec:covering} show that
$\tau(\G)=v$ with positive probability and complete the proof.

\subsection{ Proof of Theorem~\ref{main:lower} assuming
Theorem~\ref{main:many-covers}}
\label{sec:parameter-choice}

Let us first prove a lemma.

\begin{lemma}\label{des:completion}
Let $0<v<k$ be integers. Let $\G$ be an intersecting $k$-graph
satisfying $\tau(\G)=v$ and $\tau(\hht^{(v)}(\G))=k$.
Let further $\mathcal E$ be an intersecting $(k-v)$-graph
satisfying $\tau(\mathcal E)=k-v$.
Suppose that the vertex sets $V(\G)$ and $V(\mathcal E)$ are disjoint.
Define
\[
 \cH=\{T\cup E:T\in\hht^{(v)}(\G),\ E\in\mathcal E\}.
\]
Then $\F:=\G\cup\cH$ is an intersecting $k$-graph,
$\tau(\G\cup\cH)=k$, and
\[
 |\G\cup\cH|>|\cH|=|\hht^{(v)}(\G)|\,|\mathcal E|.
\]
\end{lemma}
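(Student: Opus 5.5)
We verify the claims directly from the definitions, in four steps.

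\emph{Uniformity and counting.} Every $T\in\hht^{(v)}(\G)$ has size $v$ and lies in $V(\G)$. Every $E\in\mathcal E$ has size $k-v$ and lies in $V(\mathcal E)$, which is disjoint from $V(\G)$. Hence each $T\cup E$ is a $k$-set, and the map $(T,E)\mapsto T\cup E$ is injective, since $T=(T\cup E)\cap V(\G)$ and $E=(T\cup E)\cap V(\mathcal E)$. This gives $|\cH|=|\hht^{(v)}(\G)|\,|\mathcal E|$. Each member of $\cH$ meets $V(\mathcal E)$ while no member of $\G$ does, so $\G\cap\cH=\varnothing$. Since $\G\neq\varnothing$ (as $\tau(\G)=v>0$), we get $|\G\cup\cH|=|\G|+|\cH|>|\cH|$.

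\emph{Intersecting.} Pairs inside $\G$ intersect by hypothesis. For $G\in\G$ and $T\cup E\in\cH$, the set $T$ is a transversal of $\G$, so $G\cap T\neq\varnothing$. For two members $T\cup E$ and $T'\cup E'$ of $\cH$, we have $E\cap E'\neq\varnothing$ because $\mathcal E$ is intersecting.

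\emph{Covering number.} Since $\F$ is intersecting, $\tau(\F)\le k$, so it remains to show that any transversal $S$ of $\F$ has $|S|\ge k$. Write $S_1=S\cap V(\G)$ and $S_2=S\setminus V(\G)$. Because members of $\G$ lie in $V(\G)$, the set $S_1$ is a transversal of $\G$, so $|S_1|\ge v$. We split into two cases.

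\emph{Case 1: $S_1$ meets every $T\in\hht^{(v)}(\G)$.} Then $S_1$ is a transversal of $\hht^{(v)}(\G)$, and the hypothesis $\tau(\hht^{(v)}(\G))=k$ gives $|S|\ge|S_1|\ge k$.

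\emph{Case 2: some $T_0\in\hht^{(v)}(\G)$ is disjoint from $S_1$.} Then $S$ is disjoint from $T_0$ altogether, since $T_0\subset V(\G)$. Consider the members $T_0\cup E$ with $E\in\mathcal E$. The set $S$ must meet each of them, and can only do so inside $E\subset V(\mathcal E)$. Hence $S\cap V(\mathcal E)\subseteq S_2$ is a transversal of $\mathcal E$, so $|S_2|\ge k-v$. Combined with $|S_1|\ge v$, this gives $|S|\ge k$.

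The only point needing care is this case split for the covering number, where the two hypotheses $\tau(\hht^{(v)}(\G))=k$ and $\tau(\mathcal E)=k-v$ play complementary roles. The rest is routine bookkeeping.
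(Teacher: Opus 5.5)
Your proof is correct and follows the paper's argument essentially step for step. It uses the same case split on whether the transversal meets every member of $\hht^{(v)}(\G)$, and in the second case combines $\tau(\G)=v$ with $\tau(\mathcal E)=k-v$ exactly as the paper does. One fact you state without justification is $T\subseteq V(\G)$ for $T\in\hht^{(v)}(\G)$. It holds because a transversal of size $v=\tau(\G)$ cannot contain a vertex lying in no edge of $\G$, since deleting that vertex would leave a transversal of size $v-1$; the paper uses this tacitly as well.
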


\begin{proof}
Since both $\G$ and $\mathcal E$ are intersecting, to prove that
$\G\cup\cH$ is intersecting as well we only need to show that
$G\cap H\neq\varnothing$ for $G\in\G$, $H\in\cH$.
This is immediate from $G\cap T\neq\varnothing$ for all
$T\in\hht^{(v)}(\G)$.

To prove $\tau(\G\cup\cH)=k$, let $R$ be a transversal of $\F$.
If $R$ is a transversal of $\hht^{(v)}(\G)$, then $|R|\geq k$
is immediate. Otherwise fix $T\in\hht^{(v)}(\G)$ with
$T\cap R=\varnothing$. Then
\[
 R\cap E=R\cap(T\cup E)\neq\varnothing
 \quad\text{for all }E\in\mathcal E.
\]
Thus $|R\cap V(\mathcal E)|\geq\tau(\mathcal E)=k-v$.
Also, $|R\cap V(\G)|\geq\tau(\G)=v$.
Hence $|R|\geq k$. Since $\F$ is an intersecting $k$-graph,
$\tau(\F)=k$.

The disjointness of $V(\G)$ and $V(\mathcal E)$ implies
$|\cH|=|\hht^{(v)}(\G)|\,|\mathcal E|$ and
$\G\cap\cH=\varnothing$, so $|\G\cup\cH|>|\cH|$.
\end{proof}

\begin{corollary}\label{des:product}
Let $b,v$ be the integers given by Theorem~\ref{main:many-covers}.
Then there is an intersecting $k$-graph with covering number $k$
and at least $b^v(k-v)!$ edges.
\end{corollary}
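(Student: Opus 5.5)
The plan is to combine Theorem~\ref{main:many-covers} with the Erd\H{o}s--Lov\'asz construction of Example~\ref{ex:EL} by means of Lemma~\ref{des:completion}. First, take the integers $0<b<v<k$ and the intersecting $k$-graph $\G$ on $X$ provided by Theorem~\ref{main:many-covers}. By that theorem, $\tau(\G)=v$, $\tau(\hht^{(v)}(\G))=k$, and $|\hht^{(v)}(\G)|\geq b^v$. So $\G$ satisfies exactly the hypotheses Lemma~\ref{des:completion} imposes on its first ingredient.

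Next, set $k'=k-v$, which is a positive integer since $v<k$. Apply Example~\ref{ex:EL} with $k'$ in place of $k$, choosing the pairwise disjoint sets $E_1,\ldots,E_{k'}$ (with $|E_i|=i$) outside the finite set $X$. This is possible since we may take fresh elements. The resulting family $\mathcal E$ is an intersecting $(k-v)$-graph with $\tau(\mathcal E)=k-v$. Its vertex set $V(\mathcal E)\subseteq\bigcup_i E_i$ is disjoint from $V(\G)\subseteq X$. Its size is
\[
 |\mathcal E|=\sum_{i=1}^{k-v}\frac{(k-v)!}{i!}\geq \frac{(k-v)!}{1!}=(k-v)!,
\]
where we keep only the $i=1$ term.

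Now Lemma~\ref{des:completion} applies to $\G$ and $\mathcal E$. It yields the intersecting $k$-graph $\F=\G\cup\cH$ with $\tau(\F)=k$ and
\[
 |\F|>|\hht^{(v)}(\G)|\,|\mathcal E|\geq b^v(k-v)!.
\]
This is the claim.

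No step here is a real obstacle. The only point needing care is the disjointness of vertex sets, which is guaranteed because $X$ is finite and the blocks $E_i$ can be chosen outside it. The lower bound $|\mathcal E|\geq(k-v)!$ follows immediately from the $i=1$ summand.
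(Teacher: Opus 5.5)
Your proposal is correct and matches the paper's proof: you take $\G$ from Theorem~\ref{main:many-covers}, take the Erd\H{o}s--Lov\'asz family $\mathcal E$ of uniformity $k-v$ on a vertex set disjoint from $V(\G)$, and apply Lemma~\ref{des:completion}, with $|\mathcal E|\geq (k-v)!$ coming from the $i=1$ summand. Your explicit checks that $k-v\geq 1$ and that the vertex sets can be chosen disjoint are details the paper leaves implicit.
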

\begin{proof}
Let $\G$ be given by Theorem~\ref{main:many-covers} and
$\mathcal E$ by Example~\ref{ex:EL}, with uniformity $k-v$
and vertex set disjoint from $V(\G)$.
Then $\F=\G\cup\cH$ satisfies all requirements by
Lemma~\ref{des:completion}, since
$|\hht^{(v)}(\G)|\geq b^v$ and $|\mathcal E|\geq(k-v)!$.
\end{proof}

\begin{proof}[Proof of Theorem~\ref{main:lower}]
Let $k$ be an integer with $\log k\geq32$, and choose $b,v$
as in Theorem~\ref{main:many-covers}.
By Corollary~\ref{des:product} and
$(k-v)!\geq((k-v)/e)^{k-v}$, we have
\[
 \log m(k)\geq v\log b+(k-v)\log\frac{k-v}{e}.
\]
Since $0<b<v<k$, we have $\log(b/k)<0$ and hence
$v\log(b/k)\geq k\log(b/k)$. Therefore,
\[
\begin{aligned}
 v\log b+(k-v)\log\frac{k-v}{e}
 &=k\log k+v\log\frac bk
   -(k-v)\log\frac{ek}{k-v}\\
 &\geq k\log k+k\log\frac bk
   -(k-v)\log\frac{ek}{k-v}.
\end{aligned}
\]
Applying $ \log(1-x)
\geq -x-\frac{x^2}{2(1-x)}$ for $0<x<1$ with $x=(k-b)/k$, we infer that 
\[
\begin{aligned}
 k\log\frac bk=k\log\left(1-\frac{k-b}{k}\right)\geq-(k-b)
   -\frac{k\bigl((k-b)/k\bigr)^2}
          {2\bigl(1-(k-b)/k\bigr)}=-(k-b)-\frac{(k-b)^2}{2b}.
\end{aligned}
\]
Consequently,
\[
 \log m(k)\geq k\log k-(k-b)
 -(k-v)\log\frac{ek}{k-v}
 -\frac{(k-b)^2}{2b}.
\]

Since $\log k\geq32$, \eqref{intro:cover-bounds} implies $b\geq k/2$ 
and $e\leq20\sqrt{k/\log k}\leq k$.
The function $x\log(ek/x)$ is increasing on $(0,k]$, so
\eqref{intro:cover-bounds} yields
\[
 (k-v)\log\frac{e k}{k-v}
 \leq20\sqrt{\frac{k}{\log k}}\,
       \log\frac{e k}{20\sqrt{k/\log k}}
 \leq12\sqrt{k\log k}.
\]
For the last inequality, we used
\[
 \log\frac{e k}{20\sqrt{k/\log k}}
 =\frac12\log k+\frac12\log\log k+1-\log20
 \leq\frac35\log k,
\]
since $\log\log k\leq\frac15\log k$ and $e<20$.
Also,
\[
 \frac{(k-b)^2}{2b}\leq1600\log k\leq\sqrt{k\log k}.
\]
Consequently,
\[
 \log m(k)\geq k\log k-(37+12+1)\sqrt{k\log k}
             = k\log k-50\sqrt{k\log k}.
\]
Thus the theorem follows.
\end{proof}

\subsection{An auxiliary hypergraph}
\label{sec:auxiliary}

Define the {\it degree of $x$} as $d_{\F}(x)=|\{F\in\F:x\in F\}|$ and the {\it maximum degree of $\F$} as 
$\Delta(\F)=\max_x d_{\F}(x)=\Delta_1(\F)$.
Let  $[N]=\{1,\ldots,N\}$ and let $\mathbf1_A$ be $1$ if the
condition $A$ holds and $0$ otherwise.

We use the following standard consequence of
\cite[Lemma~1 and Theorem~4]{Hoeffding}.

\begin{lemma}[Chernoff--Hoeffding bound \cite{Hoeffding}]\label{des:chernoff}
Let $Z$ be a real-valued random variable with
$\mathbb E2^Z<\infty$. Then
\begin{equation}\label{des:chernoff-general}
 \Pr(Z\geq0)\leq\mathbb E2^Z.
\end{equation}
\begin{enumerate}
\item[(i)] Let $w_1,\ldots,w_N$ be fixed real numbers,
let $B$ be chosen uniformly at random from $\binom{[N]}m$,
and let $Z=\sum_{j\in B}w_j$.
Then for every $t>0$,
\begin{equation}\label{des:chernoff-sampling}
 \Pr(Z\geq0)
 \leq \mathbb E e^{tZ}
 \leq\exp\left\{\frac mN\sum_{j=1}^N(e^{tw_j}-1)\right\}.
\end{equation}
If $w_j\in[0,1]$ for every $j\in[N]$, then for every
$a\geq0$ and $t>0$,
\[
 \Pr(Z\geq a)\leq\exp\{-ta+(e^t-1)\mathbb EZ\}.
\]
\item[(ii)] Let $Z_1,\ldots,Z_N$ be independent random variables
taking values in $[0,1]$, and put $Z=\sum_{j=1}^N Z_j$.
Then for every $a\geq0$ and $t>0$,
\begin{equation}\label{des:chernoff-tail}
 \Pr(Z\geq a)\leq\exp\{-ta+(e^t-1)\mathbb EZ\}.
\end{equation}
\end{enumerate}
\end{lemma}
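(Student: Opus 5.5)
The plan is to derive every inequality from Markov's inequality applied to an exponential of $Z$, together with the convexity bound $e^{tz}\le 1+z(e^t-1)$ for $z\in[0,1]$. For \eqref{des:chernoff-general}, the event $\{Z\ge0\}$ is exactly the event $\{2^Z\ge1\}$. Since $2^Z$ is nonnegative with finite mean, Markov's inequality gives $\Pr(Z\ge0)=\Pr(2^Z\ge1)\le\mathbb E2^Z$. The same argument with $e^{tZ}$ in place of $2^Z$, for any $t>0$, gives $\Pr(Z\ge 0)\le\mathbb E e^{tZ}$. This is the first inequality in \eqref{des:chernoff-sampling}.

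For the second inequality in (i), I will invoke Hoeffding's comparison theorem (\cite[Theorem~4]{Hoeffding}), which is the main input. Let $Z'=\sum_{i=1}^m w_{J_i}$, where $J_1,\ldots,J_m$ are chosen independently and uniformly from $[N]$, so the sampling is with replacement. The theorem states that $\mathbb E f(Z)\le\mathbb E f(Z')$ for every continuous convex $f$. Taking $f(x)=e^{tx}$ gives
\[
\mathbb E e^{tZ}\le\mathbb E e^{tZ'}=\Bigl(\frac1N\sum_{j=1}^N e^{tw_j}\Bigr)^m=\Bigl(1+\frac1N\sum_{j=1}^N(e^{tw_j}-1)\Bigr)^m\le\exp\Bigl\{\frac mN\sum_{j=1}^N(e^{tw_j}-1)\Bigr\},
\]
where the last step uses $1+y\le e^y$. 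This comparison step is the only non-elementary ingredient, so I would cite it rather than reprove it.

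For the tail bound in (i), assume $w_j\in[0,1]$. Markov's inequality gives $\Pr(Z\ge a)\le e^{-ta}\mathbb E e^{tZ}$. Convexity of $x\mapsto e^{tx}$ on $[0,1]$ gives $e^{tw_j}-1\le (e^t-1)w_j$. By symmetry of the uniform $m$-subset, each index lies in $B$ with probability $m/N$, so $\mathbb EZ=\frac mN\sum_j w_j$. Substituting these into the bound just proved yields $\Pr(Z\ge a)\le\exp\{-ta+(e^t-1)\mathbb EZ\}$.

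For (ii), independence gives $\mathbb E e^{tZ}=\prod_j\mathbb E e^{tZ_j}$. Since each $Z_j$ takes values in $[0,1]$, the same convexity bound applies pointwise, so
\[
\mathbb E e^{tZ_j}\le 1+(e^t-1)\mathbb EZ_j\le\exp\{(e^t-1)\mathbb EZ_j\}.
\]
Multiplying over $j$ and applying Markov's inequality as in (i) gives \eqref{des:chernoff-tail}.
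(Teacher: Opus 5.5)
Your proof is correct and takes essentially the same route as the paper, which gives no argument of its own beyond citing Hoeffding's Lemma~1 and Theorem~4. That citation amounts to what you do: the exponential Markov inequality with the convexity bound $e^{tz}\le 1+z(e^t-1)$ on $[0,1]$, plus the comparison $\mathbb E f(Z)\le\mathbb E f(Z')$ between sampling without and with replacement for convex $f$, finished with $1+y\le e^y$.
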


\begin{proposition}[Kirkman~\cite{kirkman1847}]
\label{prop-kirkman}
For every $n\geq1$, the complete graph $K_{2n}$ can be partitioned into
$2n-1$ perfect matchings.
\end{proposition}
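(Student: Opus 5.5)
We prove Proposition~\ref{prop-kirkman} with the classical round-robin ("rotation") construction. Identify the vertex set of $K_{2n}$ with $\mathbb Z_{2n-1}\cup\{\infty\}$, where the $2n-1$ residues mod $2n-1$ sit on a circle and $\infty$ is a point at the centre. For each $i\in\mathbb Z_{2n-1}$ define
\[
 M_i=\bigl\{\{i,\infty\}\bigr\}\cup\bigl\{\{i-j,i+j\}:1\le j\le n-1\bigr\}.
\]
The case $n=1$ is trivial: $K_2$ is itself one perfect matching, and the formula gives $M_0=\{\{0,\infty\}\}$.

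First we check that each $M_i$ is a perfect matching. It contains $n$ edges. The vertices $i\pm j$ for $1\le j\le n-1$, together with $i$, run over all $2n-1$ residues exactly once. This holds because $i+j\equiv i-j'$ with $1\le j,j'\le n-1$ would force $j+j'\equiv0 \pmod{2n-1}$, which is impossible since $2\le j+j'\le 2n-2$. Likewise $i+j\equiv i+j'$ forces $j=j'$. Hence $M_i$ covers every vertex exactly once.

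Next we check that every edge of $K_{2n}$ lies in exactly one $M_i$. The edge $\{x,\infty\}$ lies only in $M_x$. An edge $\{x,y\}$ with $x\ne y$ in $\mathbb Z_{2n-1}$ lies in $M_i$ exactly when $x+y\equiv 2i$. Since $2n-1$ is odd, $2$ is invertible modulo $2n-1$, so $i\equiv (x+y)\cdot 2^{-1}$ is uniquely determined. We must also confirm that this edge really appears in $M_i$. Write $y-i\equiv j$; then $x-i\equiv -j$, and $j\not\equiv0$ because $x\ne y$. Replacing $j$ by $-j$ if needed, we may take $1\le j\le n-1$. So the edge occurs in $M_i$, and in no other matching.

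Counting confirms the picture: $(2n-1)\cdot n=\binom{2n}{2}$, so the $2n-1$ matchings $M_i$ partition $E(K_{2n})$. The only real point in the argument is the invertibility of $2$ modulo the odd number $2n-1$, which gives uniqueness of $i$; everything else is a direct check.
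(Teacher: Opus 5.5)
Your round-robin construction is correct and complete: the checks that each $M_i$ is a perfect matching and that each edge $\{x,y\}$ of $\mathbb Z_{2n-1}$ lies in $M_i$ exactly for $i\equiv (x+y)\cdot 2^{-1}$ are all that is needed. The paper gives no proof of its own and simply cites Kirkman; it only uses the existence of $r\le 2n-1$ edge-disjoint perfect matchings of $K_{2n}$, and your argument is the standard proof of that fact.
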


\begin{figure}[h]
\centering
\includegraphics[width=0.5\textwidth]{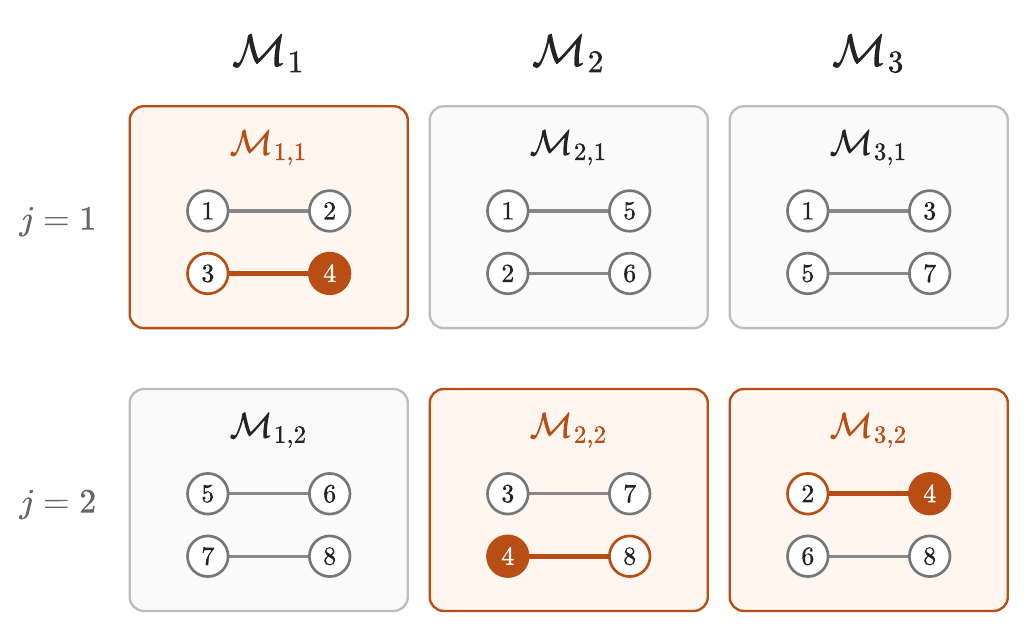}\hspace{50pt}
\includegraphics[width=0.35\textwidth]{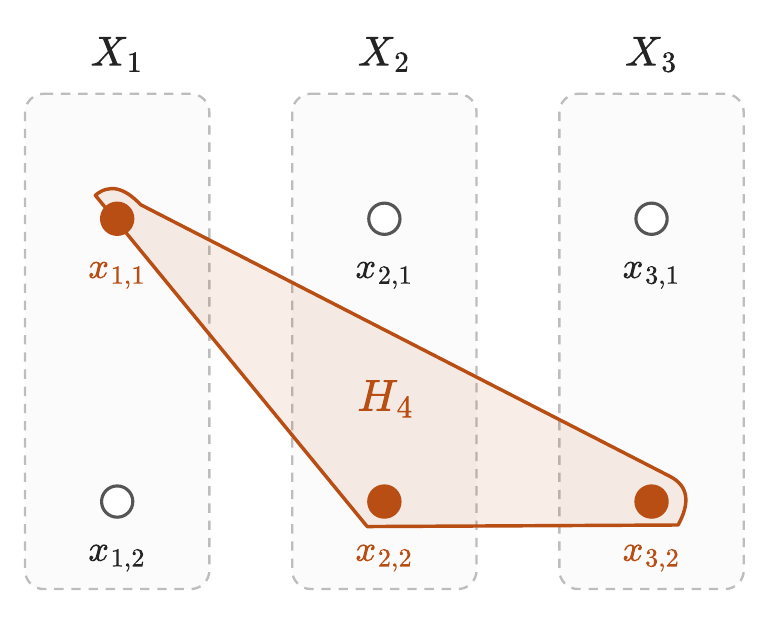}
\caption{The case $2n=8$, $r=3$, $q=2$ and $H_4=\{x_{1,1},x_{2,2},x_{3,2}\}$.}\label{figure-1}
\end{figure}

\begin{definition}\label{des:random-definition}
Let $1\leq q\leq n$ and $1\leq r\leq2n-1$.
Fix $r$ edge-disjoint perfect matchings $\M_1,\ldots,\M_r$
of $K_{2n}$ from Proposition~\ref{prop-kirkman}.
Independently for each $i\in[r]$,
choose an ordered partition 
\[
 \M_i=\M_{i,1}\mathbin{\dot\cup}\cdots
             \mathbin{\dot\cup}\M_{i,q}
\]
uniformly at random, whose class sizes differ by at most one.
For each class $\M_{i,j}$, introduce a new vertex $x_{i,j}$.
Thus $X_i=\{x_{i,j}:j\in[q]\}$, $i=1,2,\ldots,r$, are pairwise disjoint $q$-sets. Set
$V(\M_{i,j})=\bigcup_{P\in\M_{i,j}}P$.
For $p\in[2n]$ and $\ell\in[r]$, let $g_\ell(p)$ be the unique
$j\in[q]$ such that $p\in V(\M_{\ell,j})$, and define
\[
 H_p=\{x_{\ell,g_\ell(p)}:\ell\in[r]\},\ 
 \cH=\{H_p:p\in[2n]\}.
\]
The ground set of $\cH$ is $X_1\cup\cdots\cup X_r$;
the elements of $[2n]$ index its edges.
Each $H_p$ contains one vertex from each $X_\ell$, recording the
class containing the matching edge through $p$ in $\M_\ell$.
See Figure~\ref{figure-1} for an illustration.
Thus $\cH$ is a random $r$-partite $r$-graph on $X_1,\ldots,X_r$.
Since each $\M_{i,j}$ contains at most $\lceil n/q\rceil$ edges, we infer that $|V(\M_{i,j})|\leq 2\lceil n/q\rceil$. It follows that $x_{i,j}$ are contained in at most $2\lceil n/q\rceil$ edges of $\cH$. Thus
$\Delta(\cH)\leq 2\lceil n/q\rceil$.
\end{definition}

\begin{proposition}\label{des:random-core}
Let $m$ be a positive integer.
If
\begin{equation}\label{des:core-conditions}
 r/q\geq64\log(2n)\mbox{ and } m\geq2r/q,
\end{equation}
then the following hold simultaneously with probability at least $1/2$:
\begin{enumerate}
\item[(i)] $|H_i\cap H_j|\leq m$ for distinct $i,j\in[2n]$;
\item[(ii)] for every $i\in[2n]$ and $U\subseteq H_i$ with
$|U|\leq r/4$,
\[
 \left|\left\{j\in[2n]\setminus\{i\}:
 |U\cap H_j|\geq|H_i\cap H_j|/2\right\}\right|<16q.
\]
\item[(iii)] There are sets
$P_i\subseteq[2n]\setminus\{i\}$ of size $r$, with
$j\in P_i$ if and only if $i\in P_j$. Moreover, there are vertices
$a_{ij}=a_{ji}\in H_i\cap H_j$ such that
$H_i=\{a_{ij}:j\in P_i\}$.
\end{enumerate}
\end{proposition}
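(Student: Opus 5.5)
The plan is to treat (iii) as a deterministic consequence of the matching structure, and to prove (i) and (ii) by Chernoff bounds (Lemma~\ref{des:chernoff}) and a union bound. Each of (i) and (ii) should fail with probability at most $1/4$.

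The basic observation is this. For distinct $i,j\in[2n]$ let
\[
 S_{ij}=\{\ell\in[r]:g_\ell(i)=g_\ell(j)\},
\]
so that $|H_i\cap H_j|=|S_{ij}|$, because $H_i$ and $H_j$ each have exactly one vertex in every $X_\ell$. Since $\M_1,\ldots,\M_r$ are edge-disjoint, the pair $\{i,j\}$ is an edge of at most one $\M_\ell$; for that $\ell$, the coordinate $\ell$ always lies in $S_{ij}$.

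For every other $\ell$, $i$ and $j$ lie on distinct matching edges of $\M_\ell$. The event $\ell\in S_{ij}$ is then the event that these two edges fall into the same class of a random balanced partition. Its probability is at most $(\lceil n/q\rceil-1)/(n-1)\le(1+o(1))/q$, and these events are independent over $\ell$.

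\textbf{Part (iii).} Let $P_i$ be the set of partners of $i$ in $\M_1,\ldots,\M_r$. These $r$ partners are distinct by edge-disjointness, so $|P_i|=r$, and $j\in P_i$ if and only if $i\in P_j$. If $\{i,j\}\in\M_\ell$, put $a_{ij}=a_{ji}=x_{\ell,g_\ell(i)}$. Then $g_\ell(j)=g_\ell(i)$, so $a_{ij}\in H_i\cap H_j$. As $\ell$ runs over $[r]$ these vertices run over all of $H_i$, so $H_i=\{a_{ij}:j\in P_i\}$.

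\textbf{Part (i).} Apply \eqref{des:chernoff-tail} to $|S_{ij}|-1$ with $t=1$ and $a=m-1\ge 2r/q-1$. The mean is at most $(1+o(1))r/q$, so the bound is about $\exp(-(2-(e-1))r/q)\le\exp(-0.25\cdot64\log(2n))$. This beats a union bound over the fewer than $4n^2$ pairs by a wide margin.

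\textbf{Part (ii) (the main obstacle).} Fix $i$ and $U\subseteq H_i$ with $|U|\le r/4$, and let $L\subseteq[r]$ be the coordinates of $U$. Call $j$ \emph{bad} if $|S_{ij}\cap L|\ge|S_{ij}|/2$, that is, if
\[
 Z_j:=2|S_{ij}\cap L|-|S_{ij}|\ge0 .
\]
Summing over single $j$ is hopeless, because we must take a union over $2^r$ choices of $U$. I would instead fix a set $J\subseteq[2n]\setminus\{i\}$ of exactly $16q$ indices and bound the probability that all of $J$ is bad. That event implies $Z:=\sum_{j\in J}Z_j\ge0$, so by \eqref{des:chernoff-general} its probability is at most $\mathbb E2^Z$.

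Now $Z=\sum_\ell Y_\ell$, where $Y_\ell=\pm N_\ell$ and $N_\ell$ is the number of $j\in J$ sharing $i$'s class in $\M_\ell$. The sign is $+$ for $\ell\in L$ and $-$ otherwise. The $Y_\ell$ are independent over $\ell$, so $\mathbb E2^Z=\prod_\ell\mathbb E2^{Y_\ell}$. Each $N_\ell$ is a without-replacement sampling count, apart from at most one $j$ per $\ell$ that is a matching partner. I would handle these forced agreements by bounding each of them crudely by a factor $2$; their total effect is at most $2^{|J|}=2^{16q}$, which is negligible against $e^{r}$ since $r/q$ is large.

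Using \eqref{des:chernoff-sampling} with weights $\pm\log2$, the expected value of $N_\ell$ is about $|J|/q=16$, and we get $\mathbb E2^{Y_\ell}\lesssim e^{16}$ for $\ell\in L$ and $\lesssim e^{-8}$ for $\ell\notin L$. Hence
\[
 \mathbb E2^Z\lesssim\exp\bigl(16\cdot\tfrac r4-8\cdot\tfrac{3r}4\bigr)=e^{-2r}.
\]
The union bound is over $2n$ choices of $i$, at most $2^r$ choices of $U$, and at most $(2n)^{16q}\le e^{r/4}$ choices of $J$, the last because $r/q\ge64\log(2n)$. This gives a total of at most $2n\,e^{(\log2+1/4)r}e^{-2r}<1/4$.

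The delicate part is the bookkeeping of constants in this step. That includes the $(1+o(1))/q$ versus $1/q$ discrepancy from class sizes $\lceil n/q\rceil$ and the forced partner coordinates. I expect these to fit because the margin between $e^{-2r}$ and $e^{0.95r}$ is large.
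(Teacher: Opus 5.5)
Your plan follows the paper's proof. Part (iii) is read off deterministically from the matchings, and (i) is a Chernoff bound plus a union bound over pairs. For (ii) you use the same bad events indexed by $(i,Q,J)$ with $|J|=16q$, the same aggregated variable $\sum_{\ell\in Q}Y_\ell-\sum_{\ell\notin Q}Y_\ell$, the base-$2$ moment generating function with the sampling bound \eqref{des:chernoff-sampling}, and the same union bound over $2n\,2^r\binom{2n-1}{16q}$ events. Even your main term $16\cdot\frac r4-8\cdot\frac{3r}4=-2r$ is the paper's. A small simplification: $\lceil n/q\rceil-1\le (n-1)/q$, so the same-class probability is at most $1/q$ exactly, and no $(1+o(1))$ is needed.

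\emph{The gap.} One step in (ii) is not constant bookkeeping: your per-coordinate estimate $\mathbb E\,2^{Y_\ell}\lesssim e^{16}$ for $\ell\in Q$ is not correct in general. The items sampled into $i$'s class in $\M_\ell$ are matching edges, not indices $j$. An edge $P_{\ell,p}$ with both endpoints in $J$ has weight $\pm2$, and its two endpoints enter or leave the class together.
\begin{itemize}
\item For $\ell\in Q$ such an edge contributes $(2^2-1)/q=3/q$ to the exponent instead of $2/q$.
\item For $\ell\notin Q$ it contributes $(2^{-2}-1)/q=-3/(4q)$ instead of $-1/q$.
\item If $\M_\ell$ pairs up all of $J$, then $\log\mathbb E\,2^{Y_\ell}$ is about $24$ for $\ell\in Q$ and $-6$ for $\ell\notin Q$. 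This is essentially tight, since $Y_\ell$ is then close to twice a Poisson$(8)$ variable.
\end{itemize}
So any argument using only per-coordinate information yields at best $24\cdot\frac r4-6\cdot\frac{3r}4=\frac{3r}{2}>0$, and the estimate collapses.

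\emph{The fix.} You need a global count from the edge-disjointness of $\M_1,\dots,\M_r$. The total number of pairs $(\ell,p)$ with $P_{\ell,p}\subseteq J$ is at most $\binom{16q}{2}$. Hence these edges add at most $\frac1q\binom{16q}{2}<128q\le r/2$ to the exponent, using $r\ge256q$; this is exactly the step the paper inserts.

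\emph{Class sizes.} Your class-size correction matters only for $\ell\notin Q$, and there you need the \emph{lower} bound $\frac{|\M_{\ell,g_\ell(i)}|-1}{n-1}\ge\frac1q-\frac2{n-1}$, coming from classes of size $\lfloor n/q\rfloor$. It costs at most $16rq/(n-1)$ in total. With these corrections and your forced-partner factor $2^{16q}$, the exponent becomes $-5r/4$, as in the paper, which still beats the union bound.
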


Note that for any $\ell\in [r]$ and $p\in [q]$, we have $a_{ij} =a_{i'j'}=x_{\ell,p}$ for any $(i,j),(i',j')\in \M_{\ell, p}$. 

\begin{proof}
For any distinct $i,j\in [2n]$,
\[
 |H_i\cap H_j|
 =\sum_{\ell=1}^r\mathbf1_{\{g_\ell(i)=g_\ell(j)\}}.
\]
 Since $\M_1,\ldots,\M_r$ are pairwise edge-disjoint, at most one of
$\M_1,\ldots,\M_r$ contains $(i,j)$. For any $\M_\ell$ with $(i,j)\notin \M_{\ell}$, the probability that $g_\ell(i)=g_\ell(j)$ is at most 
\[
\frac{\binom{n-2}{\lceil n/q\rceil-2}}
     {\binom{n-1}{\lceil n/q\rceil-1}}
 =\frac{\lceil n/q\rceil-1}{n-1}\leq\frac1q.
\]
Without loss of generality, assume that $\M_\ell$ does not contain  $(i,j)$ for $\ell=1,2,\ldots, r-1$. Then 
\[
\mathbb E\bigl[\sum_{\ell=1}^{r-1}\mathbf1_{\{g_\ell(i)=g_\ell(j)\}}\bigr]<\frac{r}{q}. 
\]
Applying \eqref{des:chernoff-tail} with $a=m$ and $t=\log2$,
\[
 \Pr\bigl( |H_i\cap H_j| \geq m+1\bigr) \leq  \Pr\bigl( \sum_{\ell=1}^{r-1}\mathbf1_{\{g_\ell(i)=g_\ell(j)\}} \geq m\bigr) \leq \exp\{-m\log2+r/q\}.
\]
Since $m\geq 2r/q$ and $r/q\geq 64\log(2n)$ imply $-m\log2+r/q\leq -r/(4q)\leq -16\log(2n)$, 
by the union bound we obtain that
\begin{equation}\label{des:property-i}
\begin{aligned}
 \Pr\bigl(\textup{(i) fails}\bigr)
 \leq4n^2\exp\{-m\log2+r/q\}\leq4n^2e^{-16\log(2n)}=
 (2n)^{-14}<1/4.
\end{aligned}
\end{equation}

For (ii), recall that $\cH$ is an $r$-partite $r$-graph on partite sets $X_1,X_2,\ldots,X_r$ with $|X_i|=q$. For any $Q\subseteq [r]$, define 
$X_Q= \cup_{\ell\in Q}X_\ell$.  For $i\in[2n]$, $Q\subseteq[r]$ with
$|Q|\leq r/4$, and $J\in\binom{[2n]\setminus\{i\}}{16q}$, define the bad event $E_{i,Q,J}$ as the event that 
\begin{align}\label{ineq-2.1-1}
 |H_i\cap H_j\cap X_Q|\geq|H_i\cap H_j|/2 \mbox{ holds for all  }j \in J. 
\end{align}
Obviously, (ii) holds if and only if all the bad events $E_{i,Q,J}$ do not happen for $i\in [2n]$, $Q\subseteq[r]$ with
$|Q|\leq r/4$, and $J\in\binom{[2n]\setminus\{i\}}{16q}$. 

For $\ell\in [r]$, define 
\[
 Y_\ell=|\{j\in J:g_\ell(i)=g_\ell(j)\}| =\sum_{j\in J} |H_i\cap H_j\cap X_\ell|.
\]
By the definition of $Y_\ell$, we have
\[
 \sum_{\ell\in Q}Y_\ell=\sum_{j\in J}|H_i\cap H_j\cap X_Q|,
 \qquad
 \sum_{\ell\notin Q}Y_\ell
 =\sum_{j\in J}|H_i\cap H_j\setminus X_Q|.
\]
If \eqref{ineq-2.1-1} holds, then by the definition of $Y_\ell$ we have
\begin{align}\label{ineq-2.1-2}
 \sum_{\ell\in Q}Y_\ell=\sum_{j\in J}|H_i\cap H_j\cap X_Q|\geq\sum_{j\in J}|H_i\cap H_j\setminus X_Q|  =\sum_{\ell\notin Q}Y_\ell.
\end{align}
Define the event $E_{i,Q,J}'$  as the event that \eqref{ineq-2.1-2} holds. Then $\Pr(E_{i,Q,J})\leq \Pr(E_{i,Q,J}')$. 

Note that
$Y_\ell=\sum_{j\in J}\mathbf1_{\{g_\ell(i)=g_\ell(j)\}}$.
If $(i,j)\in\M_\ell$, then $g_\ell(i)=g_\ell(j)$
with probability one.
If $(i,j)\notin\M_\ell$, then
\[
 \frac1q-\frac{2}{n-1}
 \leq \Pr\bigl(g_\ell(i)=g_\ell(j)\bigr)
 \leq\frac1q.
\]
Indeed, conditional on its size,
$\M_{\ell,g_\ell(i)}$ is uniformly distributed over the subsets
of $\M_\ell$ of that size containing the matching edge through $i$,
and its size is either $\lfloor n/q\rfloor$ or $\lceil n/q\rceil$.  Assume that $\M_\ell =\{P_{\ell,1},P_{\ell,2},\ldots,P_{\ell,n}\}$ with $i\in P_{\ell,1}$ for every
$\ell\in[r]$. Let $w_{\ell,p}=|P_{\ell,p}\cap J|$ for $\ell\in Q$ and $w_{\ell,p}=-|P_{\ell,p}\cap J|$ for $\ell\notin Q$. Then
\begin{align}\label{ineq-2.1-3}
\sum_{\ell\in Q}Y_\ell-\sum_{\ell\notin Q}Y_\ell = \sum_{\ell \in [r]} \sum_{p=1}^n w_{\ell,p}
       \mathbf1_{\{P_{\ell,p}\in\M_{\ell,g_\ell(i)}\}}. 
\end{align}

Conditional on $|\M_{\ell,g_\ell(i)}|$, the other
$|\M_{\ell,g_\ell(i)}|-1$ edges form a uniform sample from
$P_{\ell,2},\ldots,P_{\ell,n}$.
Applying the second inequality of \eqref{des:chernoff-sampling}
to $w_{\ell,2},\ldots,w_{\ell,n}$ with $t=\log2$, and including
the always selected edge $P_{\ell,1}$, we obtain that 
\[
\begin{aligned}
 &\quad \ \log\mathbb E\left[
 2^{\sum_{p=1}^n w_{\ell,p}
       \mathbf1_{\{P_{\ell,p}\in\M_{\ell,g_\ell(i)}\}}}
 \,\middle|\,|\M_{\ell,g_\ell(i)}|\right]\\[3pt]
 &\leq w_{\ell,1}\log2
 +\frac{|\M_{\ell,g_\ell(i)}|-1}{n-1}
       \sum_{p=2}^n(2^{w_{\ell,p}}-1)\\[3pt]
 &=\frac{|\M_{\ell,g_\ell(i)}|-1}{n-1}
       \sum_{p=1}^n(2^{w_{\ell,p}}-1)
 +w_{\ell,1}\log2
 -\frac{|\M_{\ell,g_\ell(i)}|-1}{n-1}
       (2^{w_{\ell,1}}-1).
\end{aligned}
\]

Since $J\in \binom{[2n]\setminus \{i\}}{16q}$, we infer that $|w_{\ell,1}| = |P_{\ell,1}\cap J|\leq 1$ and the number of $P_{\ell,1}$ 
with $|P_{\ell,1}\cap J|=1$ is at most $16q$. 
Note that $\frac{1}{q} - \frac{2}{n-1} \leq \frac{|\M_{\ell,g_\ell(i)}|-1}{n-1}\leq \frac{1}{q}$. We infer that 
\[
\frac{|\M_{\ell,g_\ell(i)}|-1}{n-1}
       \sum_{p=1}^n(2^{w_{\ell,p}}-1) \leq \frac1q\sum_{p=1}^n(2^{w_{\ell,p}}-1)+\frac{16q}{n-1}\mathbf1_{\{\ell\notin Q\}}. 
\]
If $\ell\in Q$, then $w_{\ell,1}\log2
 -\frac{|\M_{\ell,g_\ell(i)}|-1}{n-1}
       (2^{w_{\ell,1}}-1)\leq |P_{\ell,1}\cap J|$. If $\ell\notin Q$, then $w_{\ell,1}\log2
 -\frac{|\M_{\ell,g_\ell(i)}|-1}{n-1}
       (2^{w_{\ell,1}}-1)\leq 0$,
since
\[
 -\log2+\frac{|\M_{\ell,g_\ell(i)}|-1}{2(n-1)}
 \leq-\log2+\frac12<0.
\]
Thus, 
\[
\log\mathbb E\left[
 2^{\sum_{p=1}^n w_{\ell,p}
       \mathbf1_{\{P_{\ell,p}\in\M_{\ell,g_\ell(i)}\}}}
 \,\middle|\,|\M_{\ell,g_\ell(i)}|\right] \leq \frac1q\sum_{p=1}^n(2^{w_{\ell,p}}-1)+\frac{16q}{n-1}\mathbf1_{\{\ell\notin Q\}} +|P_{\ell,1}\cap J|\mathbf1_{\{\ell\in Q\}}. 
\]
The bound is uniform in $|\M_{\ell,g_\ell(i)}|$. Averaging and using
\eqref{ineq-2.1-3} and the independence of $Y_1,\ldots,Y_r$,
which depend on the independently chosen partitions of
$\M_1,\ldots,\M_r$, respectively, we obtain
\begin{equation}\label{des:layer-mgf}
\begin{aligned}
 \log\mathbb E\,2^{\sum_{\ell\in Q}Y_\ell
                         -\sum_{\ell\notin Q}Y_\ell}
 &\leq \sum_{\ell\in [r]}\frac1q\sum_{p=1}^n(2^{w_{\ell,p}}-1)+\frac{16q}{n-1}\sum_{\ell\in [r]}\mathbf1_{\{\ell\notin Q\}} +\sum_{\ell\in [r]}|P_{\ell,1}\cap J|\mathbf1_{\{\ell\in Q\}}\\
 &\leq \sum_{\ell\in [r]}\frac1q\sum_{p=1}^n(2^{w_{\ell,p}}-1)+\frac{16rq}{n-1} +16q. 
\end{aligned}
\end{equation}
Since each $\M_\ell$ is a perfect matching,
$\sum_{p=1}^n|P_{\ell,p}\cap J|=16q$. Hence
\[
 \sum_{p=1}^n(2^{w_{\ell,p}}-1)
 =
 \begin{cases}
 \displaystyle 16q+\sum_{p=1}^n
       \mathbf1_{\{P_{\ell,p}\subseteq J\}},
       &\ell\in Q,\\[5pt]
 \displaystyle-8q+\frac14\sum_{p=1}^n
       \mathbf1_{\{P_{\ell,p}\subseteq J\}},
       &\ell\notin Q.
 \end{cases}
\]
Since the matchings are edge-disjoint,
\[
 \sum_{\ell=1}^r\sum_{p=1}^n
 \mathbf1_{\{P_{\ell,p}\subseteq J\}}\leq\binom{16q}{2}.
\]
Summing over $\ell$ and using $1/4\leq1$, we obtain
\[
 \frac1q\sum_{\ell=1}^r\sum_{p=1}^n(2^{w_{\ell,p}}-1)
 \leq16\left(|Q|-\frac{r-|Q|}{2}\right)
      +\frac1q\binom{16q}{2}.
\]
Substituting this estimate for
$\frac1q\sum_{\ell=1}^r\sum_{p=1}^n(2^{w_{\ell,p}}-1)$
into \eqref{des:layer-mgf}, we obtain
\[
\begin{aligned}
 \log\mathbb E\,2^{\sum_{\ell\in Q}Y_\ell
                         -\sum_{\ell\notin Q}Y_\ell}
 &\leq16\left(|Q|-\frac{r-|Q|}{2}\right)
       +\frac1q\binom{16q}{2}+\frac{16rq}{n-1}+16q\\
 &\leq-2r+\frac r2+\frac r4=-\frac{5r}{4}.
\end{aligned}
\]
Here $r\leq2n-1$ and \eqref{des:core-conditions} imply
$2n>64$ and $r\geq256q$; we also used $|Q|\leq r/4$.

Since $E_{i,Q,J}\subseteq E'_{i,Q,J}$, applying
\eqref{des:chernoff-general} to $\sum_{\ell\in Q}Y_\ell
                         -\sum_{\ell\notin Q}Y_\ell$ yields
\[
 \Pr(E_{i,Q,J})\leq\Pr(E'_{i,Q,J})
 =\Pr\left(\sum_{\ell\in Q}Y_\ell
          -\sum_{\ell\notin Q}Y_\ell\geq0\right)
 \leq e^{-5r/4}.
\]
If (ii) fails, some event $E_{i,Q,J}$ occurs.
There are at most $2n\,2^r\binom{2n-1}{16q}$ such events.
Using $\log2<3/4$, $(16q+1)\log(2n)\leq r/3$ and
the union bound, we obtain
\[
\begin{aligned}
 \Pr\bigl(\textup{(ii) fails}\bigr)
 &\leq2n\,2^r\binom{2n-1}{16q}e^{-5r/4}\\
 &\leq\exp\left\{r\log2+(16q+1)\log(2n)-\frac{5r}{4}\right\}\\
 &\leq e^{-r/6}<1/4.
\end{aligned}
\]
The bound $\Pr(\textup{(ii) fails})<1/4$ and
\eqref{des:property-i} show that (i) and (ii) hold simultaneously
with probability at least $1/2$.

For (iii), define
\[
 P_i=\{j\in[2n]\setminus\{i\}:
          (i,j)\in\M_1\cup\cdots\cup\M_r\}.
\]
Since the matchings are perfect and edge-disjoint, $|P_i|=r$,
and $j\in P_i$ if and only if $i\in P_j$.
For $j\in P_i$, let $\ell$ be the unique index such that
$(i,j)\in\M_\ell$, and set
\[
 a_{ij}=x_{\ell,g_\ell(i)}.
\]
The vertices $i,j$ belong to the same matching edge, so
$g_\ell(i)=g_\ell(j)$ and hence
$a_{ij}=a_{ji}\in H_i\cap H_j$.
For fixed $i$, the $r$ indices $j\in P_i$ correspond to the $r$ different
matchings, so the vertices $a_{ij}$ lie in different partite
sets and are distinct. Thus,
\[
 H_i=\{a_{ij}:j\in P_i\}.
\]
This proves (iii) for every choice of the partitions.
\end{proof}

Fix an integer $k$ with $\log k\geq32$, and set
\begin{equation}\label{des:choice}
\begin{gathered}
 h=2\left\lceil4\sqrt{k\log k}\right\rceil,\,
 s=\left\lceil\frac{h}{\log k}\right\rceil+2,\, 
 m=\left\lceil\frac{16h^2}{k}\right\rceil,\, 
 v=2\left\lfloor\frac{k-2s-16m}{2}\right\rfloor.
\end{gathered}
\end{equation}

\begin{lemma}\label{des:parameter-bounds}
Let $h,s,m,v$ be defined by \eqref{des:choice}.
These parameters are positive integers, with $h,v$ even,
$0<v<k$, and satisfy
\begin{equation}\label{des:conditions}
 h^2\geq32v\log v,\qquad
 \frac{32v}{h}\leq s\leq h,\qquad
 \frac{8h^2}{v}\leq m\leq\frac h{16}.
\end{equation}
Moreover,
\[
 k-v\leq20\sqrt{k/\log k}.
\]
\end{lemma}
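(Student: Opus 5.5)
The plan is to verify each claim by direct estimation. Everything is expressed through the single quantity $L=\log k\ge 32$, and the ceilings and floors are controlled by crude additive errors. Since $k\ge e^{32}$, we have $\sqrt{k}\ge e^{16}>8\cdot10^6$. Hence any polynomial in $L$ of modest degree is negligible against $\sqrt{k/L}$; concretely, $L^{3/2}\le 32^{3/2}e^{-16}\sqrt{k}\cdot(L/32)^{3/2}e^{16-L/2}$, and $L^{3/2}e^{-L/2}$ is decreasing for $L\ge 3$. I will isolate this comparison once as an auxiliary inequality of the form $cL\le\sqrt{k/L}$ with $c$ around $2^{15}$, and reuse it throughout.

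First I record two-sided bounds on the parameters. From \eqref{des:choice}:
\[
8\sqrt{kL}\le h\le 8\sqrt{kL}+4,\qquad \frac hL+2\le s\le \frac hL+3,\qquad 1024L\le\frac{16h^2}{k}\le m\le \frac{16h^2}{k}+1 .
\]
Using $h^2\le 64kL(1+1/\sqrt{kL})^2$, this gives $m\le 1025L+2$, say. Integrality and evenness of $h$ and $v$ are immediate from the definitions. Next,
\[
2s+16m\le 16\sqrt{k/L}+O(1)+16(1025L+2).
\]
By the auxiliary inequality this is at most $19\sqrt{k/L}$, which is far below $k/2$. Hence $k-2s-16m\ge k/2$, so $v>0$, and also $v\ge k-2s-16m-1\ge k/2$. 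Clearly $v<k$. The same estimate gives
\[
k-v\le 2s+16m+1\le 20\sqrt{k/L},
\]
which is the final claim.

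It remains to check \eqref{des:conditions}. For the first condition, $v<k$ gives $32v\log v<32kL\le h^2/2$. For the second, $32v/h\le 32k/(8\sqrt{kL})=4\sqrt{k/L}$, while $s\ge h/L\ge 8\sqrt{k/L}$; also $s\le h/L+3\le h$ since $L\ge 32$ and $h$ is large. For the third, $v\ge k/2$ gives $8h^2/v\le 16h^2/k\le m$. Finally, $m\le 1025L+2\le\sqrt{kL}/2\le h/16$, again by the auxiliary inequality.

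The only step needing real care is the bound $k-v\le 20\sqrt{k/L}$. There, the term $16m\approx 16384L$ must fit into the slack of about $4\sqrt{k/L}$ left after $2s\approx 16\sqrt{k/L}$. This requires $\sqrt{k}\gtrsim 4096L^{3/2}$, and it is exactly where the hypothesis $\log k\ge 32$ is used: at $L=32$, $4096\cdot 32^{3/2}\approx 7.4\cdot10^5<e^{16}$, and the margin only grows with $L$. I would therefore state and prove that monotone comparison first, and then all remaining inequalities follow by substitution.
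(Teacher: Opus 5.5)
Your proposal is correct and follows essentially the same route as the paper. Both arguments take two-sided bounds on $h,s,m$ from the ceilings, show $k-v\le 2s+16m+1\le 20\sqrt{k/\log k}$ using $\sqrt{k}\gg(\log k)^{3/2}$ when $\log k\ge 32$, deduce $v\ge k/2$, and then check each inequality in \eqref{des:conditions} by substitution. The only cosmetic difference is in proving $m\le h/16$: the paper bounds $m\le 32h^2/k$ and uses $h\le k/512$, whereas you bound $m\le 1025\log k+2$ directly.
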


\begin{proof}
By the definitions in \eqref{des:choice} and $\log k\geq32$,  we have
\[
\begin{gathered}
 8\sqrt{k\log k}\leq h\leq9\sqrt{k\log k},\,
 s\leq9\sqrt{\frac{k}{\log k}},\, 
 \frac{16h^2}{k}\leq m\leq\frac{32h^2}{k},\,
 16m+1\leq2\sqrt{\frac{k}{\log k}}.
\end{gathered}
\]
Hence,
\[
 0<k-v\leq2s+16m+1
 \leq20\sqrt{\frac{k}{\log k}}\leq k/2,
\]
which proves the bound on $k-v$ in \eqref{intro:cover-bounds}
and implies $v\geq k/2$.
The bounds on $h,s$ imply $s\leq h$.
Also, $h\leq9\sqrt{k\log k}\leq k/512$, so
$m\leq32h^2/k\leq h/16$.
The remaining inequalities in \eqref{des:conditions} follow from
\[
\begin{gathered}
 h^2\geq64k\log k\geq32v\log v,\quad
 sh\geq\frac{h^2}{\log k}\geq64k\geq32v\mbox{ and } 
 m\geq\frac{16h^2}{k}\geq\frac{8h^2}{v}. \qedhere
\end{gathered}
\]
\end{proof}

We fix $h,s,m,v$ as in \eqref{des:choice} for the rest of this section.
Lemma~\ref{des:parameter-bounds} ensures that
Corollary~\ref{des:auxiliary} applies to these values.

\begin{corollary}\label{des:auxiliary}
Let $h,s,v,m$ be positive integers, with $h,v$ even,
satisfying \eqref{des:conditions}.
Then there is a $(4h+2s)$-graph
$\mathcal L=\{L_1,\ldots,L_v\}$ on $V(\mathcal L)$,
with pairwise disjoint $2s$-sets $S_i\subseteq L_i$ whose
vertices have degree one.
There are sets $P_i\subseteq[v]\setminus\{i\}$ of size $4h$,
with $j\in P_i$ if and only if $i\in P_j$, and vertices
$a_{ij}=a_{ji}\in L_i\cap L_j$, distinct for fixed $i$, such that
\[
 L_i\setminus S_i=\{a_{ij}:j\in P_i\}.
\]
Moreover,
\[
 \Delta(\mathcal L)\leq h,\qquad
 \bigl|V(\mathcal L)\bigr|=4h\lceil v/h\rceil+2sv\leq v^2/4,
\]
and $|L_i\cap L_j|\leq m$ for distinct $i,j\in[v]$.
For every $i\in[v]$ and $U\subseteq L_i$ with
$|U\setminus S_i|\leq h$,
\[
 \left|\left\{j\in[v]\setminus\{i\}:
 |U\cap L_j|\geq|L_i\cap L_j|/2\right\}\right|<s.
\]
\end{corollary}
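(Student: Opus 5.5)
We obtain $\mathcal L$ from Proposition~\ref{des:random-core} with the parameters
\[
 2n=v,\qquad r=4h,\qquad q=\lceil v/h\rceil ,
\]
keeping the same $m$. The two conditions in \eqref{des:core-conditions} have to be checked first. For the first one, $r/q=4h/\lceil v/h\rceil\ge 4h/(2v/h)=2h^2/v$, since $h\le v$ gives $\lceil v/h\rceil\le 2v/h$. Then $h^2\ge 32v\log v$ yields $r/q\ge 64\log v=64\log(2n)$. For the second one, $r/q\le 4h/(v/h)=4h^2/v$, so $m\ge 8h^2/v\ge 2r/q$. We also need $r\le 2n-1$, that is $4h<v$, and $q\le n$. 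Both follow from $h\le 9\sqrt{k\log k}\le k/512$ and $v\ge k/2$ (Lemma~\ref{des:parameter-bounds}); more intrinsically, $m\le h/16$ together with $m\ge 8h^2/v$ gives $v\ge 128h$. Since the event in Proposition~\ref{des:random-core} has probability at least $1/2$, we may fix one realization $\cH=\{H_1,\dots,H_v\}$ on $X_1\cup\dots\cup X_{4h}$ satisfying (i)--(iii).

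Next we attach to each $H_i$ a set $S_i$ of $2s$ new vertices, with the $S_i$ pairwise disjoint and disjoint from $V(\cH)$, and put $L_i=H_i\cup S_i$. This makes $\mathcal L$ a $(4h+2s)$-graph whose new vertices have degree one, and $L_i\cap L_j=H_i\cap H_j$ for $i\ne j$.

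The structural statements now transfer directly.
\begin{itemize}
\item The sets $P_i$, of size $r=4h$, and the vertices $a_{ij}$ come from (iii). We get $L_i\setminus S_i=H_i=\{a_{ij}:j\in P_i\}$, and the $a_{ij}$ are distinct for fixed $i$ because they lie in distinct parts $X_\ell$.
\item The bound $|L_i\cap L_j|\le m$ is (i).
\item For the degree bound, Definition~\ref{des:random-definition} gives $\Delta(\cH)\le 2\lceil n/q\rceil=2\lceil v/(2q)\rceil$. Since $q\ge v/h$ we have $v/(2q)\le h/2$, and since $h$ is even this gives $\lceil v/(2q)\rceil\le h/2$. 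Hence $\Delta(\mathcal L)=\Delta(\cH)\le h$.
\item The vertex count is $|V(\mathcal L)|=rq+2sv=4h\lceil v/h\rceil+2sv$. To get $\le v^2/4$, use $4h\lceil v/h\rceil\le 4v+4h\le 8v$ and $2sv\le 2hv$. Since $h\le k/512$ and $v\ge k/2$, the total is far below $v^2/4$. If one insists on deriving everything from \eqref{des:conditions} alone, one can use $v\ge128h$ and $s\le h$.
\end{itemize}

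The only step requiring care is the last property. Take $U\subseteq L_i$ with $|U\setminus S_i|\le h$, and set $U'=U\setminus S_i\subseteq H_i$. Then $|U'|\le h=r/4$. For $j\ne i$ we have $U\cap L_j=U'\cap H_j$, because $S_i$ meets no other edge, and $L_i\cap L_j=H_i\cap H_j$. So the set of bad $j$ equals the set controlled by (ii) applied to $U'$, and its size is less than $16q$.

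The remaining obstacle is to compare $16q$ with $s$. We have $16q=16\lceil v/h\rceil\le 16v/h+16$. The hypothesis $s\ge 32v/h$ beats $16v/h+16$ as soon as $16v/h\ge16$, i.e.\ $v\ge h$, which holds. Therefore $16q\le s$, and the strict bound $<16q$ gives $<s$, completing the proof.
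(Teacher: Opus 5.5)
Your proposal is correct and follows essentially the same route as the paper: you apply Proposition~\ref{des:random-core} with $2n=v$, $r=4h$, $q=\lceil v/h\rceil$, append disjoint degree-one sets $S_i$, and transfer (i)--(iii), the degree bound via the evenness of $h$, and the comparison $16\lceil v/h\rceil\le s$. Two small points: $q\le n$ follows from $h\ge 2$ (since $h$ is even), not from the upper bounds on $h$ you cite; and the exact count $|V(\mathcal L)|=4h\lceil v/h\rceil+2sv$ needs every matching class to be nonempty, which is exactly what $q\le n$ guarantees.
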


\begin{proof}
The inequalities in \eqref{des:conditions} imply
$h\leq v/32$ and $v\geq64$.
Apply Proposition~\ref{des:random-core} with
$2n=v$, $r=4h$ and $q=\lceil v/h\rceil$.
Here $4h\leq v-1$, 
$\lceil v/h\rceil\leq\min\{v/2,2v/h\}$ and
\[
\begin{aligned}
 \frac{4h}{\lceil v/h\rceil}
 \geq\frac{2h^2}{v}\geq64\log v,\qquad
 m \geq\frac{8h^2}{v}\geq\frac{8h}{\lceil v/h\rceil}.
\end{aligned}
\]
These verify \eqref{des:core-conditions}.
Fix a resulting $4h$-graph $\cH=\{H_1,\ldots,H_v\}$ and the sets $P_i$
and vertices $a_{ij}$ from Proposition~\ref{des:random-core}\textup{(iii)}.
Choose pairwise disjoint new $2s$-sets $S_i$ outside
$\bigcup\cH$ and define
\[
 L_i=H_i\mathbin{\dot\cup}S_i,\qquad
 \mathcal L=\{L_i:i\in[v]\}.
\]
Then $\mathcal L$ is a $(4h+2s)$-graph on $V(\mathcal L)$,
and each vertex of $S_i$ has degree one.
By Proposition~\ref{des:random-core}\textup{(iii)}, $|P_i|=4h$,
$j\in P_i$ if and only if $i\in P_j$, and
$a_{ij}=a_{ji}\in L_i\cap L_j$, distinct for fixed $i$, with
\[
 L_i\setminus S_i=\{a_{ij}:j\in P_i\}.
\]
Since $h$ is even,
\[
 \Delta(\mathcal L)
 \leq2\left\lceil\frac{v}{2\lceil v/h\rceil}\right\rceil\leq h.
\]
For $i\neq j$, $L_i\cap L_j=H_i\cap H_j$ and
$U\cap L_j=(U\cap H_i)\cap H_j$ whenever $U\subseteq L_i$.
Thus Proposition~\ref{des:random-core}\textup{(i)} gives
$|L_i\cap L_j|=|H_i\cap H_j|\leq m$ for $i\ne j$.
Since
\[
 \frac{s}{\lceil v/h\rceil}\geq\frac{sh}{2v}\geq16,
\]
we have $16\lceil v/h\rceil\leq s$.
If $|U\setminus S_i|\leq h$, then by applying Proposition~\ref{des:random-core} (ii) to $U\cap H_i$ we obtain that
\[
 \left|\left\{j\in[v]\setminus\{i\}:
 |U\cap L_j|\geq|L_i\cap L_j|/2\right\}\right|
 <16\lceil v/h\rceil\leq s.
\]
Since all matching classes are nonempty, we infer that 
\[
 \bigl|V(\mathcal L)\bigr|=4h\lceil v/h\rceil+2sv\leq4v+4h+2sv
 \leq5v+\frac{v^2}{8}\leq\frac{v^2}{4}.\qedhere
\]
\end{proof}

\subsection{A random intersecting $k$-graph $\mathcal G$ with many covers of size $v$}
\label{sec:intersecting}

Fix the hypergraph $\mathcal L$, the sets $S_i,P_i$, and the vertices
$a_{ij}$ provided by Corollary~\ref{des:auxiliary}, with the parameters
in \eqref{des:choice}.
For $U\subseteq L_i$, define
\[
 D_i(U)=\{j\in P_i:a_{ij}\in U\}
 \cup\{j\notin P_i\cup\{i\}:|U\cap L_j|\ge|L_i\cap L_j|/2\}.
\]

For any $U \subset \binom{L_i}{h+s}$ with $|U\setminus S_i|\le h$ and any $J\subset \binom{[v]\setminus\{i\}}{h+s}$, we call $(U,J)$  a {\it admissible pair for $i$} if $D_i(U)\subseteq J$. 

For $U\subseteq L_i$ with $|U\setminus S_i|\leq h$,
the vertices $a_{ij}$, $j\in P_i$, are distinct, so
\[
 |\{j\in P_i:a_{ij}\in U\}|
 \leq |U\setminus S_i|\leq h.
\]
By corollary~\ref{des:auxiliary} we have 
$|D_i(U)|\leq h+s-1$.
By \eqref{des:conditions}, $h+s\leq2h\leq v/16<v-2$.
Thus for any $U \subset \binom{L_i}{h+s}$ with $|U\setminus S_i|\le h$, one can extend $D_i(U)$ to some $J\subset \binom{[v]\setminus\{i\}}{h+s}$
such that  $(U,J)$ forms an admissible pair for $i$.

To define the blocks, set
\begin{equation}\label{des:parameters}
 b=k-4h-2s.
\end{equation}

\begin{lemma}\label{des:block-bounds}
The block size $b$ satisfies $0<b<v<k$ and
\begin{equation}\label{des:block-condition}
 0<v-b\leq4h-16m.
\end{equation}
Moreover, $k-b\leq37\sqrt{k\log k}$.
\end{lemma}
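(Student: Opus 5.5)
The lemma is pure arithmetic on the parameters fixed in \eqref{des:choice}. I will use only the explicit bounds recorded in the proof of Lemma~\ref{des:parameter-bounds}:
\[
8\sqrt{k\log k}\le h\le 9\sqrt{k\log k},\qquad s\le 9\sqrt{k/\log k},\qquad \frac{16h^2}{k}\le m\le \frac{32h^2}{k},\qquad 16m+1\le 2\sqrt{k/\log k}.
\]
I will also use the definition $v=2\lfloor (k-2s-16m)/2\rfloor$, which gives $k-2s-16m-1\le v\le k-2s-16m$.

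\emph{Upper bound on $v-b$.} Substituting $b=k-4h-2s$ gives
\[
v-b\le (k-2s-16m)-(k-4h-2s)=4h-16m.
\]
This is the right-hand inequality of \eqref{des:block-condition}.

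\emph{Positivity of $v-b$.} From the lower bound on $v$,
\[
v-b\ge 4h-16m-1.
\]
Since $m\le h/16$ by \eqref{des:conditions}, we get $16m\le h$. Hence $v-b\ge 3h-1>0$, so $b<v$. The inequality $v<k$ is already part of Lemma~\ref{des:parameter-bounds}.

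\emph{Positivity of $b$.} Using $h\le 9\sqrt{k\log k}$ and $s\le 9\sqrt{k/\log k}\le \sqrt{k\log k}$, we obtain
\[
4h+2s\le 38\sqrt{k\log k}.
\]
We also have $\sqrt{k\log k}\le k/512$, which holds because $\log k\le k/512^2$ once $\log k\ge 32$. Therefore $4h+2s<k$, i.e.\ $b>0$.

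\emph{The bound $k-b\le 37\sqrt{k\log k}$.} This is the only step needing care, since the crude estimate $4h+2s\le 38\sqrt{k\log k}$ just above overshoots by one unit of $\sqrt{k\log k}$. I will replace it with sharper estimates.

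First, $h=2\lceil 4\sqrt{k\log k}\rceil\le 8\sqrt{k\log k}+2$, which is much better than $9\sqrt{k\log k}$. This gives
\[
4h\le 32\sqrt{k\log k}+8.
\]

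Second, $s=\lceil h/\log k\rceil+2\le h/\log k+3$. With $\log k\ge 32$ this yields $s\le \tfrac14\sqrt{k\log k}+4$, and therefore
\[
2s\le \tfrac12\sqrt{k\log k}+8.
\]

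Adding the two estimates,
\[
k-b=4h+2s\le 32.5\sqrt{k\log k}+16\le 37\sqrt{k\log k},
\]
where the last step uses $\sqrt{k\log k}\ge 4$. So the whole lemma reduces to these two sharpened estimates on $h$ and $s$; everything else is immediate from Lemma~\ref{des:parameter-bounds}.
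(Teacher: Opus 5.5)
Your proof is correct and follows the paper's argument: the same sandwich $4h-16m-1\le v-b\le 4h-16m$ combined with $16m\le h$, plus an arithmetic bound on $k-b=4h+2s$. The one difference is minor. The paper keeps $h\le 9\sqrt{k\log k}$ and reaches $37$ by noting $2s\le 18\sqrt{k/\log k}\le\sqrt{k\log k}$, which also gives $b>0$ via $37\sqrt{k\log k}\le k/2$; your sharper estimate $h\le 8\sqrt{k\log k}+2$ is valid but not needed.
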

\begin{proof}
By the definition of $v$ in \eqref{des:choice} and $m\leq h/16$
from \eqref{des:conditions},
\[
 0<4h-16m-1\leq v-b\leq4h-16m,
\]
which proves \eqref{des:block-condition}.
Also, \eqref{des:choice} and $\log k\geq32$ imply
\[
 k-b=4h+2s
 \leq36\sqrt{k\log k}+18\sqrt{\frac{k}{\log k}}
 \leq37\sqrt{k\log k}\leq k/2.
\]
Thus $b>0$, and \eqref{des:block-condition} and
Lemma~\ref{des:parameter-bounds} imply $b<v<k$.
\end{proof}

By \eqref{des:conditions} and \eqref{des:block-condition},
$b\geq v-4h\geq4h+2s$, implying $|L_i|\leq |B_j|$. Now let us define a random intersecting $k$-graph by using the $(4h+2s)$-graph $\mathcal L$  and 
all admissible pairs $(U,J)$.

\begin{definition}[The random intersecting $k$-graph $\G$]
Take disjoint $b$-sets $B_1,\ldots,B_v$ outside $V(\mathcal L)$, and set
$X=V(\mathcal L)\cup\bigcup_iB_i$. Independently for each ordered pair $i\ne j$,
choose a uniform injection $f_{i,j}:L_i\to B_j$.
For every admissible $(U,J)$ and every bijection $g:U\to J$, define 
\[
 E_i(U,g)=B_i\cup(L_i\setminus U)
       \cup\{f_{i,g(y)}(y):y\in U\}
\]
By \eqref{des:parameters}, $|E_i(U,g)|=b+(3h+s)+(h+s)=k$,
as shown in Figure \ref{figure-2}. 
Define 
\[
 \G_i=\{E_i(U,g):(U,J)\text{ admissible for }i,\ 
                  g:U\to J\text{ bijective}\}.
\]
and  define $\G=\cup_{i=1}^{v}\G_i$.
\end{definition}

Note that $\G$ is a random $k$-graph on the fixed vertex set $\bigcup_{i=1}^v B_i 
   \cup V(\mathcal L)$.  With $\mathcal L$, the sets $S_i,P_i,B_i$, and the vertices $a_{ij}$
fixed, the randomness in $\G$ comes entirely from the injections
$f_{i,j}:L_i\to B_j$ for ordered pairs $(i,j)\in[v]^2$ with $i\ne j$,
chosen independently and uniformly from all such injections.

\begin{figure}[t]
\centering
\includegraphics[width=0.7\textwidth]{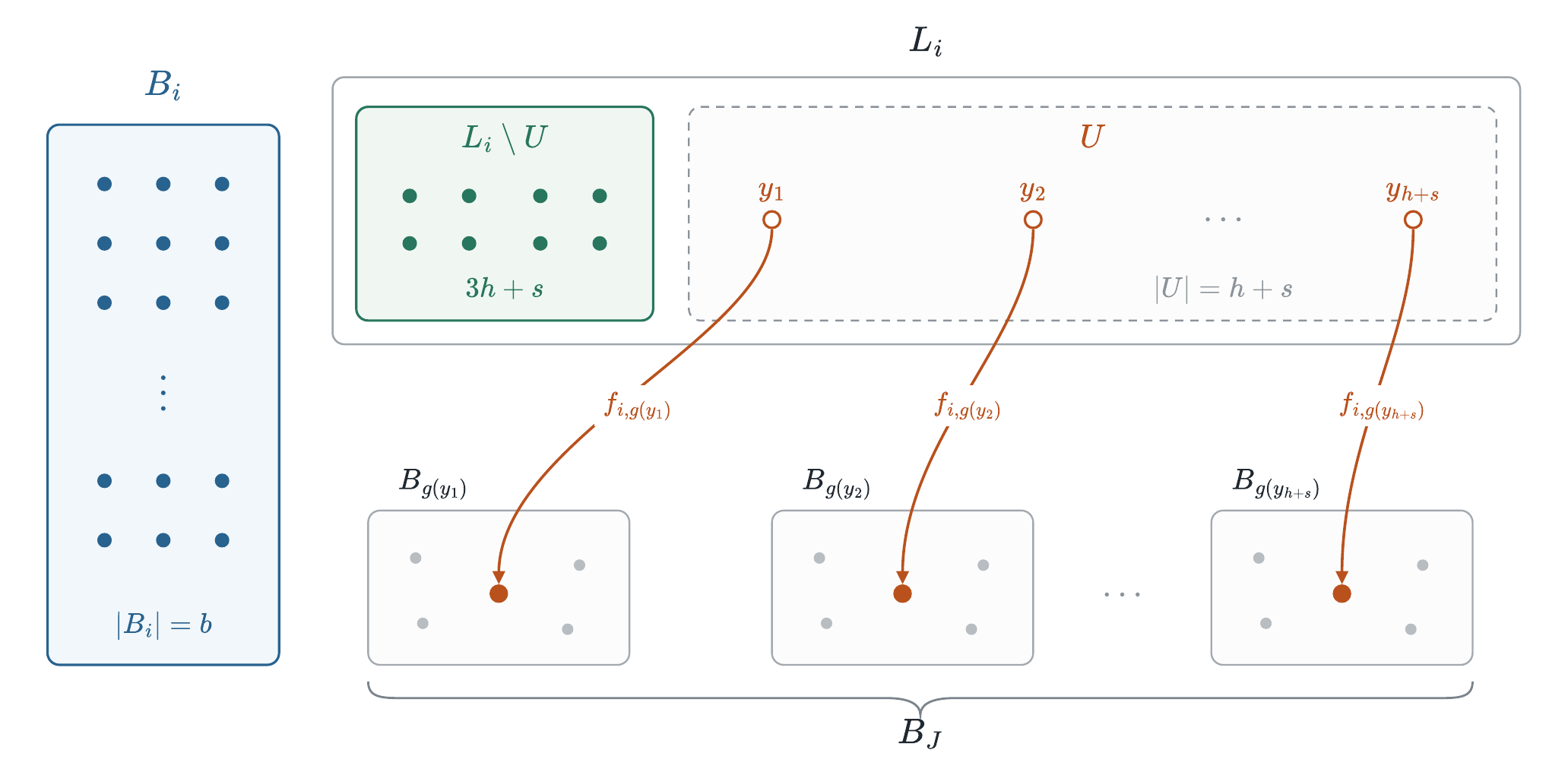}
\caption{The construction of $E_i(U,g)$.}\label{figure-2}
\end{figure}

Define $ \Rr=B_1\times B_2\times \ldots\times B_v$,  and for every $i\in [v]$, $y\in L_i$ define 
\[
 T_{i,y}
=\{y\}\cup
 \{f_{i,j}(y):j\in[v]\setminus\{i\}\}.
\]

\begin{claim}\label{des:intersection-covers}
For every choice of injections $f_{i,j}$ with $i\neq j$, $\G$ is intersecting with $\Rr\cup \{T_{i,y}\colon i\in [v],\ y\in L_i\}\subset \hht^{(v)}(\G)$. 
Moreover,  for each fixed $i$, $\{T_{i,y}\colon y\in L_i\}$ forms a matching of size $4h+2s$, which is disjoint from $B_i$.
\end{claim}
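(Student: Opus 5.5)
The claim has three parts, and each is checked directly from the definitions, for an arbitrary fixed choice of the injections $f_{i,j}$: (a) $\G$ is intersecting; (b) every member of $\Rr$ and every $T_{i,y}$ is a transversal of $\G$ of size exactly $v$; (c) for fixed $i$ the sets $T_{i,y}$, $y\in L_i$, are pairwise disjoint and miss $B_i$. No probabilistic input is needed. The only place where the structure of $\mathcal L$ (the sets $D_i(U)$ and admissibility) enters is (a), which I expect to be the main point.

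For (a), take $E=E_i(U,g)\in\G_i$ and $E'=E_{i'}(U',g')\in\G_{i'}$, coming from admissible pairs $(U,J)$ and $(U',J')$. If $i=i'$, both sets contain $B_i\neq\varnothing$.

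Let $i\neq i'$. If $i'\in J$, write $i'=g(y)$ with $y\in U$. Then $f_{i,i'}(y)\in B_{i'}\subseteq E'$ and $f_{i,i'}(y)\in E$, so $E\cap E'\neq\varnothing$. The case $i\in J'$ is symmetric.

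So assume $i'\notin J$ and $i\notin J'$. Admissibility gives $i'\notin D_i(U)$ and $i\notin D_{i'}(U')$. Now split according to the definition of $D_i$, using that $j\in P_i$ iff $i\in P_j$.
\begin{itemize}
\item If $i'\in P_i$, then $a_{ii'}\notin U$ and $a_{i'i}\notin U'$. Since $a_{ii'}=a_{i'i}\in L_i\cap L_{i'}$, this vertex lies in $(L_i\setminus U)\cap(L_{i'}\setminus U')\subseteq E\cap E'$.
\item If $i'\notin P_i$, then $|U\cap L_{i'}|<|L_i\cap L_{i'}|/2$ and $|U'\cap L_i|<|L_i\cap L_{i'}|/2$. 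In particular $L_i\cap L_{i'}\neq\varnothing$; otherwise the defining inequality $0\ge 0$ would have put $i'$ into $D_i(U)$. Hence $U\cup U'$ cannot cover $L_i\cap L_{i'}$. Any uncovered vertex lies in $(L_i\setminus U)\cap(L_{i'}\setminus U')$, which is contained in $E\cap E'$.
\end{itemize}

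For (b), an element of $\Rr$ picks one vertex from each of the pairwise disjoint blocks $B_1,\dots,B_v$. It therefore has $v$ distinct elements and meets every $E_i(U,g)\supseteq B_i$.

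For $T_{i,y}$, the points $f_{i,j}(y)\in B_j$, $j\neq i$, lie in distinct blocks, and $y\in L_i$ lies outside all blocks, so $|T_{i,y}|=v$. To see that it meets every edge, take $E_{i'}(U,g)$.
\begin{itemize}
\item If $i'\neq i$, then $f_{i,i'}(y)\in B_{i'}\subseteq E_{i'}(U,g)$.
\item If $i'=i$ and $y\notin U$, then $y\in L_i\setminus U\subseteq E_i(U,g)$.
\item If $i'=i$ and $y\in U$, then $f_{i,g(y)}(y)\in T_{i,y}\cap E_i(U,g)$, because $g(y)\neq i$.
\end{itemize}

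For (c), fix $i$ and distinct $y,y'\in L_i$. The sets $T_{i,y}$ and $T_{i,y'}$ have $y\neq y'$ as their only elements in $V(\mathcal L)$. Within each block $B_j$ their elements are $f_{i,j}(y)\neq f_{i,j}(y')$, by injectivity of $f_{i,j}$. Hence the sets are disjoint, so $\{T_{i,y}\colon y\in L_i\}$ is a matching of size $|L_i|=4h+2s$. Finally, no $T_{i,y}$ meets $B_i$, since its elements lie in $L_i$ and in $B_j$ for $j\neq i$.
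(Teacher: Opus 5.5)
Your proof is correct and follows the paper's argument essentially step by step. You reduce to cross-intersection of $\G_i$ and $\G_{i'}$, dispose of the cases $i'\in J$ or $i\in J'$ via the blocks, then split on $i'\in P_i$ (using $a_{ii'}$) versus $i'\notin P_i$ (using the half-intersection inequalities), and you verify directly that members of $\Rr$ and the sets $T_{i,y}$ are $v$-covers, with injectivity giving the matching. Your explicit treatment of the case $i=i'$ and of $L_i\cap L_{i'}\neq\varnothing$ only spells out what the paper leaves implicit.
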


\begin{proof}
Note that each edge in $\G_i$ contains $B_i$. To show that $\G$ is intersecting, it is suffices to show that $\G_i$, $\G_j$ are cross-intersecting. 
Consider $E_i(U,g)\in\G_i$ and $E_j(U',g')\in\G_j$ with $i\ne j$,
where $(U,J)$ and $(U',J')$ are admissible for $i$ and $j$, respectively.
If $j\in J$ or $i\in J'$, the two edges intersect in $B_j$ or $B_i$,
respectively. Otherwise, by $D_i(U)\subseteq J$ and
$D_j(U')\subseteq J'$ we have
$j\notin D_i(U)$ and $i\notin D_j(U')$.

If $j\in P_i$, then $i\in P_j$ and
\[
 a_{ij}=a_{ji}\in(L_i\setminus U)\cap(L_j\setminus U'),
\]
so the two edges intersect.

If $j\notin P_i$, then $i\notin P_j$ and
\[
 |U\cap L_j|<\frac{|L_i\cap L_j|}{2},
 \qquad
 |U'\cap L_i|<\frac{|L_i\cap L_j|}{2}.
\]
Consequently,
\[
 |(L_i\cap L_j)\setminus(U\cup U')|
 \geq |L_i\cap L_j|-|U\cap L_j|-|U'\cap L_i|>0,
\]
so the two edges also intersect in this case.
Hence $\G$ is intersecting.

Each member of $\Rr$ contains one vertex from each $B_j$,
so it has size $v$ and intersects every edge of $\G$.
For $y\in L_i$,  $T_{i,y}$ has size $v$, since $y\in L_i$
and $f_{ij}(y)$ lies in  $B_j$ for all $j\neq i$. The set 
$T_{i,y}$ intersects $E_i(U,g)$ at $y$ if $y\notin U$, and at
$f_{i,g(y)}(y)$ if $y\in U$. Thus $T_{i,y}$ intersects every edge of $\G_i$. For $j\ne i$, $f_{i,j}(y)\in T_{i,y}\cap B_j$, so
$T_{i,y}$ intersects every edge of $\G_j$.
Therefore $T_{i,y}$ is a cover of $\G$. 

For fixed $i$ and distinct $y,y'\in L_i$,  by injectivity we infer that 
$f_{i,j}(y)\ne f_{i,j}(y')$ for every $j\ne i$.
Since $V(\mathcal L),B_1,\ldots,B_v$ are pairwise disjoint, we have
$T_{i,y}\cap T_{i,y'}=\varnothing$, and every $T_{i,y}$ is disjoint from $B_i$.
\end{proof}

\subsection{A necessary condition for $\tau(\mathcal G)<v$}
\label{sec:small-covers}

\begin{definition}\label{des:random-bipartite}
Fix $W\subseteq V(\G)$ with $|W|<v$, and $i\in[v]$ such that
$W\cap B_i=\varnothing$ and $|W\cap L_i|\leq h$.
Let $(U,J)$ be admissible for $i$, with $W\cap L_i\subseteq U$.
For a group of injections $f_{i,j}:L_i\to B_j$ chosen uniform at random
from the construction of $\G$, define the random bipartite graph
$\Gamma_i(W;U,J)$ with vertex classes $U$, $J$ by the edge set
\[
 E\bigl(\Gamma_i\bigr)
 =\{(y,j)\in U\times J:f_{i,j}(y)\notin W\}.
\]
\end{definition}

Note that the condition $|W\cap L_i|\leq h$ in Definition \ref{des:random-bipartite} is needed, which ensures that $W\cap L_i$
can be extended to a set $U\in \binom{L_i}{h+s}$ with 
$|U\setminus S_i|\leq h$ for an admissible pair $(U,J)$. 

For a fixed $W$, define 
\[
I=\{i\in [v]:W\cap B_i=\varnothing,\ |W\cap L_i|\le h\}. 
\]
Note that a perfect matching in $\Gamma_i(W;U,J)$ with $i\in I$ and $W\cap L_i\subset U$ gives an edge $E_i(U,g)$ of $\G_i$ avoiding $W$.
If $W$ is a cover of $\G$, then such a $\Gamma_i(W;U,J)$ has no perfect matching. Then by the K\"onig--Hall
theorem, there is a nonempty set $U'\subseteq U$ whose neighborhood
$N(U')\subseteq J$ satisfies $|N(U')|\leq |U'|-1$.
Since $|J|=h+s$,
\[
 |J\setminus N(U')|\geq h+s+1-|U'|.
\]
Choose $J'\subseteq J\setminus N(U')$ with
$|J'|=h+s+1-|U'|\geq1$.
Since there are no edges between $U'$ and $J'$, by the definition of
$\Gamma_i(W;U,J)$ we have
\begin{equation}\label{des:hall}
 |U'|+|J'|=h+s+1\mbox{ and }
 f_{i,j}(U')\subseteq W\cap B_j
 \quad\text{for all }j\in J'.
\end{equation}

Fix some $W\subseteq V(\G)$ with $|W|<v$, and set
\[
 w_j=|W\cap B_j|,\, p=\sum_{1\leq j\leq v}(w_j-1)_+,\,
 I_0=\{j:W\cap B_j=\varnothing\}.
\]
\begin{claim}\label{des:available-indices}
\begin{equation}\label{des:available}
 |I|\ge |I_0|-|W\cap V(\mathcal L)|\ge p+1.
\end{equation}
\end{claim}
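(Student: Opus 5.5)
The claim splits into two inequalities, and I would prove each by a separate counting argument.

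For the first inequality $|I|\ge |I_0|-|W\cap V(\mathcal L)|$, recall that $I$ consists of those $i\in I_0$ with $|W\cap L_i|\le h$. So it suffices to show that the number of indices $i\in I_0$ with $|W\cap L_i|>h$ is at most $|W\cap V(\mathcal L)|$. I would double count incidences: by Corollary~\ref{des:auxiliary}, $\Delta(\mathcal L)\le h$, hence
\[
 \sum_{i\in[v]}|W\cap L_i|=\sum_{x\in W\cap V(\mathcal L)}d_{\mathcal L}(x)\le h\,|W\cap V(\mathcal L)|.
\]
Each index with $|W\cap L_i|>h$, i.e.\ $|W\cap L_i|\ge h+1$, contributes at least $h+1$ to the left side. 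The number of such indices is therefore at most $h|W\cap V(\mathcal L)|/(h+1)\le|W\cap V(\mathcal L)|$. Removing these indices from $I_0$ leaves a subset of $I$, which gives the first inequality.

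For the second inequality $|I_0|-|W\cap V(\mathcal L)|\ge p+1$, I would count the elements of $W$ in the blocks. Since the $B_j$ are pairwise disjoint and disjoint from $V(\mathcal L)$,
\[
 |W|\ge |W\cap V(\mathcal L)|+\sum_{j}w_j .
\]
Each $j\notin I_0$ has $w_j\ge1$, and $w_j=(w_j-1)_++1$ for such $j$, while $(w_j-1)_+=0$ for $j\in I_0$. Hence
\[
 \sum_j w_j=(v-|I_0|)+p .
\]
Combining this with $|W|\le v-1$ gives $v-1\ge |W\cap V(\mathcal L)|+v-|I_0|+p$, which rearranges to $|I_0|-|W\cap V(\mathcal L)|\ge p+1$.

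Both steps are routine; the only point needing care is the degree bound $\Delta(\mathcal L)\le h$, which is exactly what makes the threshold $h$ in the definition of $I$ compatible with losing at most one index per vertex of $W\cap V(\mathcal L)$. Note that $W$ may contain vertices outside $V(\mathcal L)\cup\bigcup_jB_j$ only vacuously, since $W\subseteq V(\G)\subseteq X$, so the decomposition of $|W|$ used in the second step is exact.
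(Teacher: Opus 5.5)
Your proposal is correct and follows essentially the same route as the paper. The paper uses the same disjointness identity $|W|=v-|I_0|+p+|W\cap V(\mathcal L)|\le v-1$ for the second inequality, and the same degree double count with $\Delta(\mathcal L)\le h$ to show that at most $|W\cap V(\mathcal L)|$ indices of $I_0$ have $|W\cap L_i|>h$.
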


\begin{proof}
Since $V(\mathcal L),B_1,\ldots,B_v$ are pairwise disjoint,
\[
 |W|=\sum_{j=1}^v w_j+|W\cap V(\mathcal L)|
     =v-|I_0|+p+|W\cap V(\mathcal L)|\leq v-1.
\]
Rearranging gives $|I_0|-|W\cap V(\mathcal L) |\geq p+1$ in \eqref{des:available}.
On the other hand, $\Delta(\mathcal L)\leq h$ implies
\[
 \sum_{i=1}^v|W\cap L_i|
 =\sum_{y\in W\cap V(\mathcal L)}d_{\mathcal L}(y)\leq h|W\cap V(\mathcal L)|.
\]
Thus at most $|W\cap V(\mathcal L)|$ indices $i$ satisfy $|W\cap L_i|>h$.
Removing the indices with $|W\cap L_i|>h$ from $I_0$ gives
$|I|\geq |I_0|-|W\cap V(\mathcal L)|$ in \eqref{des:available}.
\end{proof}

\begin{lemma}\label{des:excess}
If $W$ is  a cover of $\G$ with $|W|<v$, then $p\ge h+s-1$.
\end{lemma}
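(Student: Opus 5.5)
The plan is to fix a cover $W$ of $\G$ with $|W|<v$ and to find one index $i\in I$ and one admissible pair for $i$ whose bipartite graph from Definition~\ref{des:random-bipartite} has no perfect matching. The Hall configuration \eqref{des:hall} then forces a lot of excess in the blocks. By Claim~\ref{des:available-indices}, $|I|\ge p+1\ge1$, so such an $i$ exists.

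Since $|W\cap L_i|\le h$, extend $W\cap L_i$ to some $U\in\binom{L_i}{h+s}$ with $|U\setminus S_i|\le h$. Then extend $D_i(U)$, which has size at most $h+s-1$, to some $J\in\binom{[v]\setminus\{i\}}{h+s}$. When choosing the extra indices of $J$, prefer indices of $I_0\setminus\{i\}$. As noted before \eqref{des:hall}, because $W$ is a cover, $\Gamma_i(W;U,J)$ has no perfect matching. So we get $U'\subseteq U$ and $J'\subseteq J$, both nonempty, with $|U'|+|J'|=h+s+1$ and $f_{i,j}(U')\subseteq W\cap B_j$ for all $j\in J'$. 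Since $f_{i,j}$ is injective, $w_j\ge|U'|$ for $j\in J'$, hence
\[
 p\ \ge\ \sum_{j\in J'}(w_j-1)_+\ \ge\ |J'|\,(|U'|-1)=(a-1)(h+s+1-a),\qquad a=|U'|.
\]
For $2\le a\le h+s$ the right-hand side is concave in $a$, so its minimum is at an endpoint, where it equals $h+s-1$. This settles the case $|U'|\ge2$.

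The main obstacle is the degenerate case $|U'|=1$, say $U'=\{y\}$ and $J'=J$, where the bound above gives nothing. Here $f_{i,j}(y)\in W$ for every $j\in J$, so $J\cap I_0=\varnothing$. Because the free slots of $J$ were filled preferentially from $I_0\setminus\{i\}$, and $J\supseteq D_i(U)$, this forces $I_0\setminus\{i\}=\varnothing$, i.e.\ $|I_0|\le1$. Then $I\subseteq I_0$ and Claim~\ref{des:available-indices} give $p+1\le|I|\le1$, so $p=0$. Moreover $|W|=v-|I_0|+p+|W\cap V(\mathcal L)|<v$ forces $I_0=\{i\}$, $W\cap V(\mathcal L)=\varnothing$, and $w_j=1$ for all $j\ne i$.

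I would then rule out this rigid situation by re-choosing $U$, which is now unconstrained by $W$ since $W\cap L_i=\varnothing$. For each $j\ne i$, injectivity means at most one $y\in L_i$ has $f_{i,j}(y)\in W$. A ``bad'' $y$ must be hit for all $h+s$ indices of its $J$. We have room to swap: $|L_i|=4h+2s>h+s$, and there are $2s$ degree-one vertices of $S_i$ that do not affect $D_i(U)$. So I would replace the bad $y$ by another vertex, keeping $|U\setminus S_i|\le h$, and rerun Hall. Each new attempt either falls into the case $|U'|\ge2$, which gives $p\ge h+s-1>0$ and contradicts $p=0$, or produces another bad vertex. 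A counting argument should bound the number of bad vertices. The $h+s$ blocking indices per bad vertex, counted against at most one blocked vertex per index $j$, should show that fewer than $|L_i|-(h+s)+1$ vertices can be bad. Hence some choice of $U$ yields a perfect matching, contradicting that $W$ is a cover. Making this swap and count argument precise, while keeping $J\supseteq D_i(U)$ as $U$ changes, is the step I expect to need the most care.
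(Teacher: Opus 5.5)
Up to the rigid case, your argument is the paper's. The Hall witness with $|U'|\ge2$ gives $p\ge(|U'|-1)|J'|\ge h+s-1$. Filling $J$ preferentially from $I_0\setminus\{i\}$ is exactly the paper's choice of $J\ni j$ with $j\in I_0\setminus\{i\}$. Both lead to $I_0=\{i\}$, $p=0$, $W\cap V(\mathcal L)=\varnothing$ and $w_j=1$ for all $j\ne i$.

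The gap is the final step. You leave it as a sketch, and the criterion you state there is not sufficient. Knowing that fewer than $|L_i|-(h+s)+1$ vertices are bad does not let you choose $U$ avoiding them. The condition $|U\setminus S_i|\le h$ forces $|U\cap S_i|\ge s$, so just $s+1$ bad vertices inside $S_i$ would block every admissible choice of $U$.

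The missing observation is that no iteration is needed. An isolated vertex $y$ of $\Gamma_i(W;U,J)$ satisfies $f_{i,j}(y)\in W\cap B_j$ for every $j\in J$. Since $|W\cap B_j|=1$ and $f_{i,j}$ is injective, each $j\ne i$ can do this for at most one $y\in L_i$.

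So it suffices to take two admissible pairs $(U_1,J_1)$, $(U_2,J_2)$ as follows:
\begin{itemize}
\item $U_1\cap U_2=\varnothing$: split $S_i$ into two $s$-sets and add disjoint $h$-subsets of $L_i\setminus S_i$.
\item $J_1\cap J_2$ contains a common index $j$: this is possible because $|D_i(U_a)|\le h+s-1$.
\end{itemize}
The two isolated vertices are distinct, yet both are mapped by $f_{i,j}$ to the unique vertex of $W\cap B_j$, which contradicts injectivity. This is the paper's argument.

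Your counting idea can also be repaired. Call $y\in L_i$ bad if $f_{i,j}(y)\in W$ for at least $h+s$ indices $j$. Bad vertices have pairwise disjoint such index sets in $[v]\setminus\{i\}$, so there are at most $(v-1)/(h+s)<v/h\le s/32$ of them, using $sh\ge32v$ from \eqref{des:conditions}. Choose $U$ as $s$ vertices of $S_i$ and $h$ vertices of $L_i\setminus S_i$, all avoiding the bad vertices. Then no $\Gamma_i(W;U,J)$ has an isolated vertex in $U$. Witnesses with $|U'|\ge2$ are excluded by $p=0$, so a perfect matching exists, contradicting that $W$ is a cover.

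Your concern about keeping $J\supseteq D_i(U)$ is moot, since $J$ is simply rechosen for each new $U$.
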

\begin{proof}
Suppose $p<h+s-1$.
For any $i\in I$ and any admissible pair $(U,J)$ for $i$
with $W\cap L_i\subseteq U$, the graph $\Gamma_i(W;U,J)$
has no perfect matching, since $W$ covers $\G$.
Let $(U',J')$ be a witness satisfying \eqref{des:hall}.

For each $j\in J'$, by \eqref{des:hall} and the injectivity of
$f_{i,j}$ we have 
\[
 w_j=|W\cap B_j|\geq |f_{i,j}(U')|=|U'|.
\]
Hence, 
\[
 p=\sum_{j=1}^{v}(w_j-1)_+
 \geq\sum_{j\in J'}(w_j-1)_+
 \geq (|U'|-1)|J'|.
\]
If $|U'|\geq2$, then $2\leq|U'|\leq h+s$ and
$|U'|+|J'|=h+s+1$ imply
\[
 p\geq(|U'|-1)|J'|\geq h+s-1,
\]
a contradiction. Thus $|U'|=1$ and $J'=J$,
so every such graph $\Gamma_i(W;U,J)$
has an isolated vertex in $U$.

If $|I_0|\geq2$, choose $i\in I$ and
$j\in I_0\setminus\{i\}$, and choose an admissible pair
$(U,J)$ for $i$ with $j\in J$ and $W\cap L_i\subseteq U$.
Since $W\cap B_j=\varnothing$, the vertex $j$ is adjacent
to every vertex of $U$, contradicting the existence of
an isolated vertex in $U$. Thus we may assume $|I_0|=1$. 

By \eqref{des:available},  $p+|W\cap V(\mathcal L)|\leq |I_0|-1$. If $I_0=\{i\}$, then $p=0$ and $W\cap V(\mathcal L)=\varnothing$.
Choose two disjoint sets $U_1,U_2\subset L_i$ satisfying
$|U_a|=h+s$ and $|U_a\setminus S_i|\leq h$ for $a=1,2$: partition $S_i$ into
two $s$-sets and add disjoint $h$-subsets of $L_i\setminus S_i$.
Fix $j\ne i$ and choose each corresponding $J_a$ to contain $j$, $a=1,2$.
Each graph $\Gamma_i(W;U_a,J_a)$ has an isolated vertex in $U_a$, $a=1,2$.  Since $I_0=\{i\}$, we have $w_j\geq1$ for every $j\ne i$.
Moreover, $p=\sum_{\ell=1}^{v}(w_\ell-1)_+=0$, so $w_j=1$
for every $j\ne i$.
Both isolated vertices are nonadjacent to $j$, so their images
under $f_{i,j}$ lie in $W\cap B_j$. Since $w_j=1$ implies $|W\cap B_j|=1$,    their images
under $f_{i,j}$ equal the  unique
vertex in $W\cap B_j$, contradicting the injectivity of $f_{i,j}$.
\end{proof}

Recall that $|B_1|=|B_2|=\ldots=|B_v|=b$.

\begin{lemma}\label{des:probability}
For each ordered pair of distinct indices $i,j\in[v]$,
choose the injection $f_{i,j}:L_i\to B_j$
independently and uniformly at random. 
For fixed $W,i,U,J$ as in Definition~\ref{des:random-bipartite},
the probability that $\Gamma_i(W;U,J)$ has no perfect matching is at most
\begin{equation}\label{des:failure}
 e^{-(h+s)/4}+\sum_{j\in J}(w_j/b)^{h+s}.
\end{equation}
If $w_j\leq b/2$ for every $j\in J$, the probability that
$\Gamma_i(W;U,J)$ has no perfect matching is at most $e^{-(h+s)/8}$.
If $w_j=b-t>b/2$ for some $B_j$ and $t\geq1$, the probability of
having no perfect matching in $\Gamma_i(W;U,J)$ is at most
$e^{-(h+s)t/(4b)}$.
\end{lemma}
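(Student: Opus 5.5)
We bound the probability that $\Gamma_i(W;U,J)$ has no perfect matching through the Hall obstruction \eqref{des:hall}, followed by a union bound over witnesses $(U',J')$. Write $N=h+s=|U|=|J|$. If the graph has no perfect matching, there are nonempty $U'\subseteq U$ and $J'\subseteq J$ with $|U'|+|J'|=N+1$ and $f_{i,j}(U')\subseteq W\cap B_j$ for all $j\in J'$.

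For a fixed $j$ and a fixed $U'$ of size $u$, the uniform injection $f_{i,j}$ maps $U'$ onto a uniform $u$-subset of $B_j$. Hence
\[
 \Pr\bigl(f_{i,j}(U')\subseteq W\cap B_j\bigr)=\binom{w_j}{u}\Big/\binom{b}{u}\le (w_j/b)^u .
\]
The maps $f_{i,j}$ for distinct $j$ are independent, so for a fixed pair $(U',J')$ the obstruction has probability at most $\prod_{j\in J'}(w_j/b)^{u}$.

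We split according to the size of $u=|U'|$.

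\textbf{The extreme cases $u=N$ and $u=1$.} The case $u=N$, $|J'|=1$ contributes at most $\sum_{j\in J}(w_j/b)^{N}$, which is the second term of \eqref{des:failure}. The case $u=1$ forces $J'=J$; that is, some $y\in U$ has all of its $N$ images inside $W$.

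\textbf{Controlling $W$ by averaging.} Since $|W|<v$ and the $B_j$ are disjoint, we have $\sum_{j} w_j<v$. We may also assume $b\ge v-4h$, so the average of $w_j/b$ over $j\in J$ is small, and the AM--GM inequality bounds the products. For general $2\le u\le N-1$, we bound the sum over witnesses by
\[
 \binom{N}{u}\binom{N}{N+1-u}\max_{|J'|=N+1-u}\prod_{j\in J'}(w_j/b)^u .
\]
To control it we use that, after discarding a single index with $w_j>b/2$ (the case $u=N$ handles this), the remaining $w_j/b$ are at most $1/2$.

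\textbf{The main obstacle.} We do not get the desired bound unconditionally. With only $w_j\le b/2$, the term for $u$ near $N/2$ is about $4^N2^{-N^2/4}$, which is tiny, but the term for $u=1$ is only $N\,2^{-N}$, and this dominates. Turning $N2^{-N}+\dots$ into a clean $e^{-N/4}$ requires $N2^{-N}\le e^{-N/4}/2$, which holds because $\log 2>1/4$ and $N$ is large. Summing the geometric-type series over $u$ then gives the first term $e^{-(h+s)/4}$.

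\textbf{The two refined statements.} If all $w_j\le b/2$, the second term is at most $N2^{-N}$, and the total is at most $e^{-N/8}$. If instead one $j_0$ has $w_{j_0}=b-t$, then $(w_{j_0}/b)^N\le e^{-Nt/b}$. Every other $j$ then has $w_j\le v-(b-t)$, which is small, so those contributions are at most $e^{-N/4}$ times a small factor. Since $t/b\le 1/2$, the sum is bounded by $e^{-Nt/(4b)}$.

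The step I expect to be delicate is the uniform summation over $u$ in the middle range with explicit constants, using $\binom{N}{u}^2\le 4^N$ against $2^{-u(N+1-u)}$. Only the endpoints $u\in\{1,N\}$ matter asymptotically.
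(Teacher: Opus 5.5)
Your skeleton (Hall witnesses $(U',J')$ with $|U'|+|J'|=N+1$, the product bound from independence of the $f_{i,j}$, and the $|J'|=1$ witnesses giving $\sum_{j\in J}(w_j/b)^N$) matches the paper's. However, your estimate for witnesses with $|J'|\ge 2$ rests on a false premise. You assume that, apart from one index whose effect is ``handled by the case $u=N$'', every $w_j/b$ is at most $1/2$. This fails in two ways. First, since $v-b\ge 4h-16m-1\ge 3h-1$, a set $W$ with $|W|<v$ can have $w_{j_1}=w_{j_2}=\lfloor b/2\rfloor+1$ with both $j_1,j_2\in J$. Second, a witness with $|J'|\ge 2$ can contain a heavy index, and such a witness is not a $u=N$ witness. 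If you bound heavy factors by $1$, the $N$ witnesses $(U',\{j_1,j_2\})$ with $|U'|=N-1$ get no decay at all. The missing step is AM--GM applied to each $J'$ separately, using only $\sum_j w_j<v$: $\prod_{j\in J'}(w_j/b)\le (v/(ab))^{a}$ for $|J'|=a$. With $b\ge v-4h\ge 7v/8$, the base is at most $4/7$ for every $a\ge 2$, and at most $8/(7\sqrt N)$ for $a>\sqrt N$. The latter bound is what beats the count $\binom{N}{a-1}\binom{N}{a}\le 4^N$, which is why the paper splits the sum at $a=\sqrt N$. Averaging over all of $J$, as you propose, does not control small $J'$. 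Done per $J'$, the argument gives the first bound unconditionally, contrary to your remark. It also makes the $u=1$ term $(v/(Nb))^N$, negligible rather than dominant.

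For the last statement, first bound the remaining terms. Note that $w_\ell\le v-b+t\le 2b/3$ for $\ell\ne j$, which is not ``small'', although $(2/3)^N\le e^{-N/4}$ suffices. You then need $e^{-Nt/b}+2Ne^{-N/4}\le e^{-Nt/(4b)}$. This does not follow from $t/b\le 1/2$, which only gives $e^{-Nt/(4b)}\ge e^{-N/8}$. For $t=1$, both exponentials are $1-O(N/b)$, and their difference is only about $3N/(4b)$. So you must show the additive error fits into this gap: $e^{-Nt/(4b)}-e^{-Nt/b}\ge e^{-N/8}\min\{Nt/(4b),1/2\}\ge e^{-N/8}N/(4b)\ge 2Ne^{-N/4}$. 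The last inequality amounts to $e^{N/8}\ge 8b$, which holds because $N\ge h\ge 64\log v$. Your argument never relates $N$ to $\log b$, and the additive route you take (the same one as the paper's) breaks down without such a relation.
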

\begin{proof}
For a potential witness $(U',J')$ in \eqref{des:hall} with $|J'|=a$, we have $|U'|=h+s+1-a$. By the independence of the random injections $f_{i,j}$, $j\in J'$, for fixed $i$, 
\[
\begin{aligned}
\Pr((U',J') \mbox{ satisfies }\eqref{des:hall}) &\leq  \prod_{j\in J'}\Pr(f_{i,j}(U')\subseteq W\cap B_j)\\
  &\leq \prod_{j\in J'}\frac{(w_j)_{h+s+1-a}}{(b)_{h+s+1-a}}\\
 &\leq\prod_{j\in J'}(w_j/b)^{h+s+1-a}
 \leq\left(\frac{v}{ab}\right)^{a(h+s+1-a)}.
\end{aligned}
\]
Here $(w)_c=w(w-1)\ldots(w-c+1)$ is a falling factorial, and $\sum_jw_j<v$.
 If $J'=\{j\}$ with $j\in J$, then 
 \[
\begin{aligned}
\Pr((U',\{j\}) \mbox{ satisfies }\eqref{des:hall})\leq (w_j/b)^{h+s}.
\end{aligned}
\]
If $|J'|=1$ then $|U'|=h+s$, implying $U'=U$. Since there are at most $|J|$ potential witness $(U',J')$ with $|J'|=1$, we infer that 
\[
\Pr( \mbox{there exists some }(U',\{j\}) \mbox{ with $j\in J$ satisfies }\eqref{des:hall})\leq \sum_{j\in J}(w_j/b)^{h+s}.
\]
The witnesses $(U',J')$ with $|J'|=a\geq2$ contribute at most
\[
 \sum_{a=2}^{h+s}\binom{h+s}{h+s+1-a}\binom{h+s}{a}
           \left(\frac{v}{ab}\right)^{a(h+s-a+1)}
 \leq e^{-(h+s)/4}.
\]
Indeed, by \eqref{des:conditions} and \eqref{des:block-condition} 
\[
 7v/8\leq b<v,\qquad h+s\geq h\geq64\log v\geq64.
\]
For $2\leq a\leq\sqrt{h+s}$, 
\begin{align*}
 \log \binom{h+s}{a-1}\binom{h+s}{a}
           \left(\frac{v}{ab}\right)^{a(h+s+1-a)} &\leq 2a\log(h+s)-a(h+s-\sqrt{h+s})\log(2b/v)\\
           &\leq-(h+s)/2,
\end{align*}
using $2\log(h+s)\leq (h+s)/6$, $\sqrt{h+s}\leq (h+s)/8$, and
$\log(2b/v)\geq1/2$.
For $a>\sqrt{h+s}$, 
\begin{align*}
 \log \binom{h+s}{a-1}\binom{h+s}{a}
           \left(\frac{v}{ab}\right)^{a(h+s-a+1)} &\leq (h+s)\log4-(h+s)\log(\sqrt{h+s}\,b/v)\\
           &\leq-(h+s)/2.
\end{align*}
Summing at most $h+s$ terms gives $(h+s)e^{-(h+s)/2}\leq e^{-(h+s)/4}$.

If $w_j/b\leq1/2$ for every $j\in J$, then
by $\sum_{j\in J}w_j\leq|W|<v$ we have 
\[
\begin{aligned}
 \sum_{j\in J}(w_j/b)^{h+s}
 =\sum_{j\in J}\frac{w_j}{b}
       \left(\frac{w_j}{b}\right)^{h+s-1}
 \leq 2^{-(h+s-1)}\frac1b\sum_{j\in J}w_j
 \leq \frac vb\,2^{-(h+s-1)}.
 \end{aligned}
 \]
Since $v/b<2$, $\log2>1/2$ and $h+s\geq64$, we have
\[
 \frac vb\,2^{-(h+s-1)}
 \leq4e^{-(h+s)/2}
 \leq e^{-(h+s)/4}.
\]
Consequently,
\[
 e^{-(h+s)/4}+\frac vb\,2^{-(h+s-1)}
 \leq2e^{-(h+s)/4}
 \leq e^{-(h+s)/8},
\]
where we used $4\leq e^{(h+s)/4}$ and
$2\leq e^{(h+s)/8}$.

Now suppose that $w_j=b-t>b/2$ and $t\geq1$.
Then $w_i\leq v-b+t\leq2b/3$ for all $i\neq j$.
The right-hand side of \eqref{des:failure} is at most
$e^{-(h+s)t/b}+2(h+s) e^{-(h+s)/4}$, whether or not $j\in J$.
Since $1\leq t<b/2$ and $h+s\leq2b$,
\[
\begin{aligned}
 e^{-(h+s)t/(4b)}-e^{-(h+s)t/b}
 &\geq e^{-(h+s)/8}\min\{(h+s)t/(4b),1/2\}\\
 &\geq e^{-(h+s)/8}\frac{h+s}{4b}\\
 &\geq2(h+s)e^{-(h+s)/4}.
\end{aligned}
\]
The estimate
$e^{-(h+s)/8}(h+s)/(4b)\geq2(h+s)e^{-(h+s)/4}$
follows from $8b\leq8v\leq v^8\leq e^{(h+s)/8}$.
Thus the probability is at most $e^{-(h+s)t/(4b)}$.
\end{proof}

\subsection{With positive probability $\tau(\mathcal G)=v$}
\label{sec:covering}

\begin{proposition}\label{des:covering}
With probability at least $1/2$, $\tau(\G)=v$.
\end{proposition}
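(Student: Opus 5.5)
Since $\Rr\subseteq\hht^{(v)}(\G)$ by Claim~\ref{des:intersection-covers}, we already have $\tau(\G)\le v$. It remains to show that with probability at least $1/2$ no $W$ with $|W|<v$ covers $\G$. We use a union bound over a suitably compressed description of $W$, combined with Lemmas~\ref{des:excess} and \ref{des:probability}.

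Fix $W$ with $|W|<v$. By Lemma~\ref{des:excess}, it suffices to consider $W$ with $p\ge h+s-1$. By Claim~\ref{des:available-indices}, $|I|\ge p+1\ge h+s$. For each $i\in I$, fix one admissible pair $(U_i,J_i)$ with $W\cap L_i\subseteq U_i$. If $W$ covers $\G$, then every $\Gamma_i(W;U_i,J_i)$, $i\in I$, has no perfect matching. For distinct $i$ these events depend on disjoint families $\{f_{i,j}\}_j$ and are therefore independent.

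The bound depends on the shape of $W$:
\begin{itemize}
\item If every $w_j\le b/2$, Lemma~\ref{des:probability} gives
\[
\Pr(W\text{ covers }\G)\le e^{-(h+s)|I|/8}\le e^{-(h+s)(p+1)/8}.
\]
\item If some $w_j=b-t>b/2$, such a $j$ is unique, since $\sum w_j<v<2b$. Then each factor is at most $e^{-(h+s)t/(4b)}$, where $t\ge 1$ and $t\gtrsim b-p$, since $p\ge w_j-1$.
\end{itemize}

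The main obstacle is the union bound, because the number of sets $W$ is enormous. To control it, only the traces of $W$ that actually matter should be counted:
\begin{itemize}
\item $W\cap V(\mathcal L)$, with at most $|V(\mathcal L)|^{|W\cap V(\mathcal L)|}\le v^{2|W\cap V(\mathcal L)|}$ choices;
\item the vector $(w_j)$;
\item the sets $W\cap B_j$ for $j$ in the relevant $J_i$'s, rather than all of $W$.
\end{itemize}
In the first case I would choose a subfamily $I^*\subseteq I$ of size about $p/(h+s)$ or so, and replace the event by the condition that the $h+s$ images $f_{i,j}(U')$ land in $W$. The aim is to bound the number of relevant configurations by roughly $\binom{v+p}{p}\,b^{O(p)}\,v^{2|W\cap V(\mathcal L)|}$, i.e.\ $e^{O(p\log v)}$. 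This is beaten by $e^{-(h+s)(p+1)/8}$ because $h+s\ge 64\log v$ by \eqref{des:conditions}. The extra indices supplied by $|W\cap V(\mathcal L)|$ in Claim~\ref{des:available-indices} should pay for the choices of $W\cap V(\mathcal L)$.

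The near-full block case $w_j=b-t$ is the delicate one. There are $\binom{b}{t}\le b^t$ choices of $W\cap B_j$ against the gain $e^{-(h+s)t|I|/(4b)}$, which is a gain per unit $t$ of $(h+s)|I|/(4b)\ge (h+s)^2/(4b)$. This requires $(h+s)^2\gg b\log b$, which holds by $h^2\ge 32v\log v$. I expect verifying these exponent comparisons, and summing over $p$, $t$ and $|W\cap V(\mathcal L)|$ to get a total failure probability at most $1/2$, to be the hardest step. I would first try it with the crude estimates above, taking a rigid choice of a single $(U_i,J_i)$ per $i$ so that independence is clean.
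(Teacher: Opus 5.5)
Your treatment of the near-full block case $w_j=b-t>b/2$ has a genuine gap for small $t$. First, $t\ge1$ is not justified. Nothing excludes $B_j\subseteq W$, and then for every $i$ whose chosen pair has $j\in J_i$, the vertex $j$ is isolated in $\Gamma_i(W;U_i,J_i)$. That graph therefore has no perfect matching with probability $1$. With a rigid choice of $(U_i,J_i)$ your bound on $\Pr(W\text{ covers }\G)$ is then vacuous, and no compression of the description of $W$ can repair that. Second, for small $t\ge1$ you compare the gain only with the $\binom bt$ choices of $W\cap B_j$. The union bound must also pay for $W\setminus B_j$, which can have up to $v-b+t-1\approx 4h-16m$ elements in a ground set of size at most $2v^2$. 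That is about $v^{3(v-b)}=v^{\Theta(h+s)}$ choices, regardless of $t$. Against this, the product of the factors $e^{-(h+s)t/(4b)}$ over $I$ is at best $e^{-(h+s)t|I|/(4b)}\ge e^{-2(h+s)t/7}$, since $|I|\le v\le 8b/7$. So your estimate fails for all $t$ up to order $\log v$.

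The missing idea is to choose the admissible pairs with $j\notin J_i$, which requires $j\notin D_i(U_i)$. This cannot be arranged for arbitrary $i$: if $j\notin P_i$ and $L_i\cap L_j=\varnothing$, then $j\in D_i(U)$ for every $U$. The paper handles $t\le m$ (Claim~\ref{des:small-t-union}) using the partners $i\in P_j\cap I$ with $a_{ij}\notin W$.
\begin{itemize}
\item Taking $U\supseteq W\cap L_i$ with $a_{ij}\notin U$ gives $j\notin D_i(U)$, so $J$ can avoid $j$.
\item Then every $\ell\in J$ satisfies $w_\ell\le|W\setminus B_j|<b/2$, and each such $i$ contributes a factor $e^{-(h+s)/8}$ \emph{independent of $t$}.
\item Counting excluded indices via $|P_j|=4h$, $v-b\le 4h-16m$, $|L_i\cap L_j|\le m$ and $\Delta(\mathcal L)\le h$ yields at least $4m$ good partners.
\item This gives $e^{-m(h+s)/2}$, which beats $|\mathcal W_{j,t}|\le v^{12(h+s)+4t}$ because $m\ge 32\log v$.
\end{itemize}
Only for $t>m$ does your per-index bound work, and then you need $|I|>b/2$, giving $e^{-(h+s)t/8}$, rather than $|I|\ge h+s$.

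Separately, when all $w_j\le b/2$ no compression is needed. The bound $e^{-(h+s)^2/8}\le e^{-4v\log v}$ already beats the crude count $e^{2v\log v}$ of all $W$ with $|W|<v$. The compression you sketch is also inconsistent as stated. Restricting to $|I^*|\approx p/(h+s)$ indices yields only $e^{-p/8}$, not $e^{-(h+s)(p+1)/8}$, and that does not beat a count of order $e^{p\log v}$.
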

\begin{proof}
By Claim \ref{des:intersection-covers} we infer $\tau(\G)\le v$.
Since $\bigl|V(\mathcal L)\bigr|\leq v^2/4$ and $b<v$, we have $|V(\G)|=\bigl|V(\mathcal L)\bigr|+bv\leq2v^2$.
The number of sets $W\subseteq V(\G)$ of size less than $v$ is at most
\[
 \sum_{i=0}^{v-1}\binom{|V(\G)|}{i}
 \leq\left(\frac{e|V(\G)|}{v-1}\right)^{v-1}
 \leq e^{2v\log v}.
\]
For each fixed $W$, choose all admissible pairs used below before
exposing the injections. For distinct indices $i$, the chosen graphs
$\Gamma_i(W;U,J)$ depend on disjoint collections
$\{f_{i,\ell}:\ell\in[v]\setminus\{i\}\}$ of independent random
injections and hence are independent.

\begin{claim}\label{des:large-block}
With probability at least $1-v^{-2v}$, every cover
$W\subseteq V(\G)$ of $\G$
with $|W|<v$ satisfies $|W\cap B_j|>b/2$ for some $j$.
\end{claim}
\begin{proof}
Fix $W\subseteq V(\G)$ with $|W|<v$ and $w_j\le b/2$ for every $j$.
If $p<h+s-1$, then by Lemma~\ref{des:excess} $W$ is not a cover.
Otherwise, in view of Claim~\ref{des:available-indices} we have $|I|\ge p+1\ge h+s$.
For each $i\in I$, choose an admissible pair $(U,J)$ with
$W\cap L_i\subseteq U$. By Lemma~\ref{des:probability} and the
independence of the chosen graphs $\Gamma_i(W;U,J)$, $i\in I$, we have
\[
 \Pr(W\text{ covers }\G)\leq e^{-(h+s)^2/8}
       \leq e^{-4v\log v}.
\]
Here $h+s\geq h$ and $h^2\geq32v\log v$ are used. 
Since there are at most $e^{2v\log v}$ candidate sets, the probability
that any such $W$ covers $\G$ is at most
$e^{-2v\log v}=v^{-2v}$.
\end{proof}

By Claim~\ref{des:large-block}, it remains to exclude covers $W$
with $w_j>b/2$ for some $j$. Fix such an index $j$ and set
$t=b-w_j$, so $0\leq t<b/2$. Since $|W|\leq v-1$ and
$p\geq w_j-1$, \eqref{des:available} gives
\begin{equation}\label{des:outside}
 |W\setminus B_j|=|W|-w_j\leq v-b+t-1,\qquad
 |I|\geq p+1\geq w_j>b/2.
\end{equation}
For any $j,t$ with $0\leq t<b/2$,  define 
\[
\mathcal W_{j,t}=\{W\subset V(\G)\colon |W|<v,\ |W\cap B_j|=b-t \}.
\]
Then 
\begin{equation}\label{des:count-heavy}
\begin{aligned}
 |\mathcal W_{j,t}| &\leq \binom bt\sum_{a=0}^{v-b+t-1}\binom{|V(\G)\setminus B_j|}{a}\\
 &\leq b^t(|V(\G)|+1)^{v-b+t}\\
 &\leq v^{3(v-b)+4t}\leq v^{12(h+s)+4t}.
\end{aligned}
\end{equation}
Here we used $b<v$,
$|V(\G)|+1\leq2v^2+1\leq v^3$ and
$v-b<4h\leq4(h+s)$ from \eqref{des:block-condition}.

\begin{claim}\label{des:small-t-union}
The probability that there exist a cover $W\subseteq V(\G)$ of $\G$
with $|W|<v$ and an index $j\in[v]$ such that
$|W\cap B_j|>b/2$ and $|W\cap B_j|\geq b- m$
is at most $v^{-(h+s)}$.
\end{claim}
\begin{proof}
Fix $W,j$ with $|W|<v$, $w_j>b/2$ and $t=b-w_j\leq m$.
By \eqref{des:outside} and \eqref{des:block-condition},
\begin{align}\label{ineq-2.5-1}
|W\setminus B_j|\leq4h-16m+t-1<4h.
\end{align}
Exclude from $P_j$ the indices with $a_{ji}\in W$, $w_i>0$,
or $|W\cap L_i|>h$. The first two classes contain at most
$|W\cap L_j|$ and $\sum_{i\ne j}w_i$ indices, respectively,
since the vertices $a_{ji}$, $i\in P_j$, are distinct.
For the third class, $|L_i\cap L_j|\leq m$ implies
$|W\cap(L_i\setminus L_j)|>h-m$.
Counting pairs $(i,y)$ with such an $i$ and
$y\in W\cap(L_i\setminus L_j)$, using $\Delta(\mathcal L)\leq h$,
the number of indices $i\in P_j$ with $|W\cap L_i|>h$ is at most $|W\cap(V(\mathcal L)\setminus L_j)|\, h/(h-m)$.
Thus  the total number of excluded indices is at most
\[
\begin{aligned}
 &|W\cap L_j|+\sum_{i\ne j}w_i
       +\frac{h}{h-m}|W\cap(V(\mathcal L)\setminus L_j)|\\
 &\quad=|W\setminus B_j|
       +\frac{m}{h-m}|W\cap(V(\mathcal L)\setminus L_j)|\\
 &\quad\leq |W\setminus B_j|+\frac{4hm}{h-m}
       \leq |W\setminus B_j|+8m,
\end{aligned}
\]
where $|W\cap(V(\mathcal L)\setminus L_j)|\leq|W\setminus B_j|<4h$
and $h-m\geq h/2$ by \eqref{des:conditions}.
Since $|P_j|=4h$ and $a_{ij}=a_{ji}$, we obtain
\begin{equation}\label{des:good-partners}
\begin{aligned}
 \bigl|\{i\in P_j\cap I:a_{ij}\notin W\}\bigr|\geq4h-|W\setminus B_j|-8m \overset{\eqref{ineq-2.5-1}}{\geq}8m-t+1\geq4m.
\end{aligned}
\end{equation}

For each of the at least $4m$ indices in \eqref{des:good-partners},
choose $U$ containing $W\cap L_i$ and avoiding $a_{ij}$.
Such a choice with $|U|=h+s$ and $|U\setminus S_i|\leq h$
is possible because $|W\cap L_i|\leq h$, $|S_i|=2s$
and $|L_i\setminus S_i|=4h$.
Since $j\in P_i$ and $a_{ij}\notin U$, we have $j\notin D_i(U)$,
so $J$ can avoid $j$.
Then $w_\ell\leq|W\setminus B_j|<4h<b/2$ for every $\ell\in J$,
using \eqref{des:conditions} and \eqref{des:block-condition}.
By Lemma~\ref{des:probability} and the independence of the chosen
graphs $\Gamma_i(W;U,J)$ for distinct $i$,
\[
 \Pr(W\text{ covers }\G)
 \leq\bigl(e^{-(h+s)/8}\bigr)^{4m}=e^{-m(h+s)/2}.
\]
By \eqref{des:count-heavy} and the union bound,  
\[
\begin{aligned}
  \sum_{j\in [v]}\sum_{0\leq t\leq m} \sum_{W\in \mathcal{W}_{j,t}} \Pr(W\text{ covers }\G)&\leq v(m+1)v^{12(h+s)+4m}e^{-m(h+s)/2} \\
  &\leq v^{2+12(h+s)+4m-16(h+s)}\\
  &\leq v^{2-3(h+s)}\leq v^{-(h+s)},
\end{aligned}
\]
where we used $m+1\leq v$ from \eqref{des:conditions},
$m\geq8h^2/v\geq256\log v\geq32\log v$ and
$4m\leq h/4\leq h+s$.
\end{proof}

\begin{claim}\label{des:large-t-union}
The probability that there exist a cover $W\subseteq V(\G)$ of $\G$
with $|W|<v$ and an index $j\in[v]$ such that
$b/2<|W\cap B_j|<b-m$
is at most $v^{2-4(h+s)}$.
\end{claim}
\begin{proof}
Fix $W,j$ with $|W|<v$, $w_j>b/2$, and $t=b-w_j>m$.
Choose an admissible pair $(U,J)$ with $W\cap L_i\subseteq U$
for every $i\in I$.
By Lemma~\ref{des:probability}, the independence of the chosen
graphs $\Gamma_i(W;U,J)$, $i\in I$, and
$|I|>b/2$ from \eqref{des:outside},
\[
 \Pr(W\text{ covers }\G)
 \leq \left(e^{-(h+s)t/(4b)}\right)^{|I|}
 \leq e^{-(h+s)t/8}.
\]
For fixed $j,t$, by \eqref{des:count-heavy}  
\[
\sum_{W\in \mathcal{W}_{j,t}}\Pr(W\text{ covers }\G) \leq v^{12(h+s)+4t}e^{-(h+s)t/8}. 
\]
Since \eqref{des:conditions} implies
$4\log v\leq(h+s)/16$ and $t>m\geq256\log v$,
\[
\begin{aligned}
\log (v^{12(h+s)+4t}e^{-(h+s)t/8}) &=  (12(h+s)+4t)\log v-(h+s)t/8\\
 &\leq12(h+s)\log v-(h+s)t/16\\
 &\leq -4(h+s)\log v.
\end{aligned}
\]
Summing over at most $v^2$ choices of $j,t$, the probability
that any cover with $t>m$ exists is at most
\[
 \sum_{j\in [v]}\sum_{m\leq t\leq b}\sum_{W\in \mathcal{W}_{j,t}}\Pr(W\text{ covers }\G) \leq v^2v^{-4(h+s)}=v^{2-4(h+s)}. \qedhere
\]
\end{proof}
Thus by Claims \ref{des:large-block}, \ref{des:small-t-union} and \ref{des:large-t-union}, we conclude that
\[
\begin{aligned}
 \Pr\bigl(\tau(\G)<v\bigr) \leq v^{-2v}+ v^{-(h+s)}+v^{2-4(h+s)} \leq3v^{-2}<1/2. \qedhere
\end{aligned}
\]
\end{proof}

\begin{proof}[Proof of Theorem~\ref{main:many-covers}]
By Lemmas~\ref{des:parameter-bounds} and \ref{des:block-bounds},
the parameters $h,s,m,v$ in \eqref{des:choice} and $b$ in
\eqref{des:parameters} satisfy $0<b<v<k$ and \eqref{intro:cover-bounds}.
By Claim~\ref{des:intersection-covers}, $\G$ is an intersecting
$k$-graph. Proposition~\ref{des:covering} allows us to fix injections
with $\tau(\G)=v$.
By Claim~\ref{des:intersection-covers},
$\Rr\subseteq\hht^{(v)}(\G)$ and $T_{i,y}\in\hht^{(v)}(\G)$
for all $i\in[v]$ and $y\in L_i$.
Since the disjoint blocks $B_i$ all have size $b$,
we have $|\hht^{(v)}(\G)|\geq|\Rr|=b^v$.
Any cover of $\Rr$ contains some full $B_i$: otherwise one can
choose a vertex outside the cover from each $B_j$.
A cover of $\hht^{(v)}(\G)$ must additionally intersect the $4h+2s$ pairwise
disjoint sets $T_{i,y}$ outside that $B_i$.
Every cover of $\hht^{(v)}(\G)$ therefore has at least $b+4h+2s=k$ vertices.
Each edge of $\G$ covers $\hht^{(v)}(\G)$, so $\tau(\hht^{(v)}(\G))=k$.
\end{proof}

\section{Concluding remarks}

A more careful calculation in our construction gives the lower bound
below, while an optimization of Zakharov's proof~\cite{Z} gives the
upper bound: for all sufficiently large $k$,
\[
 k^k\exp\bigl(-10\sqrt{k\log k}\bigr)
 \leq m(k)
 \leq k^k\exp\bigl(-\sqrt{k}/5\bigr).
\]
Nevertheless, a factor of order $(\log k)^{1/2}$ remains between
the exponents of the two exponential correction terms.

\vspace{10pt}
{\noindent \bf Acknowledgment.} The second author was supported by the
National Natural Science Foundation of China (Grant No. 12622121).

\vspace{8pt}
{\bf\noindent AI declaration}: The construction was found using generative AI tools.
The authors verified the proofs and reworked the arguments
to clarify the mathematical ideas underlying the construction
and to provide a more conceptual exposition.

\end{document}